\newcommand{\EEE}{\color{black}}
\newcommand \Om{\Omega}
\newcommand \eps{\epsilon}
\newcommand \E{\mathscr{E}}
\newcommand \N{\mathbb{N}}
\newcommand \X{\mathscr{X}}
\newcommand \V{\mathscr{V}}
\newcommand \R{\mathbb{R}}
\newcommand \Sn{{\mathbb{S}^n}}
\newcommand \Rn{\mathbb{R}^n}
\newcommand{\sm}{\setminus}
\newcommand{\sq}{\subseteq}
\newtheorem{theorem}{Theorem}
\newtheorem{corollary}[theorem]{Corollary}
\newtheorem{lemma} [theorem]{Lemma}
\newtheorem{remark} [theorem]{Remark}
\theoremstyle{definition}
\newcommand \cthe{ \definecolor{shadethmcolor}{rgb}{0.9,0.9,0.8} \definecolor{shaderulecolor}{rgb}{0.9,0.9,0.8} }
\newcommand \clem{\definecolor{shadethmcolor}{rgb}{0.95,0.8,0.95} \definecolor{shaderulecolor}{rgb}{0.95,0.8,0.95}}
\begin{document}

\title{Sharp inequalities for Neumann eigenvalues on the sphere}
\author[D. Bucur]{Dorin Bucur}
\address[Dorin Bucur]{Univ. Savoie Mont Blanc, CNRS, LAMA \\ 73000 Chamb\'ery, France}
\email{dorin.bucur@univ-savoie.fr}

\author[E. Martinet]{Eloi Martinet}
\address[Eloi Martinet]{Univ. Savoie Mont Blanc, CNRS, LAMA \\ 73000 Chamb\'ery, France}
\email{  eloi.martinet@univ-smb.fr}

\author[M. Nahon]{Mickaël Nahon}
\address[Mickaël Nahon]{Univ. Savoie Mont Blanc, CNRS, LAMA \\ 73000 Chamb\'ery, France}
\email{  mickael.nahon@univ-smb.fr}

\keywords{Neumann eigenvalues, Laplace-Beltrami operator, sharp inequalities, sphere}
\subjclass[2020]{35P15}
\begin{abstract}
We prove that the second nontrivial Neumann eigenvalue of the Laplace-Beltrami operator on the unit sphere $\Sn \sq \R^{n+1}$ is maximized by the union of two disjoint, equal, geodesic balls among all subsets of  $\Sn$ of prescribed volume. In fact, the result holds in a stronger version, involving the harmonic mean of the eigenvalues of order $2$ to $n$, and extends to  densities. A (surprising) consequence  occurs on the maximality of a geodesic ball for the first nontrivial eigenvalue under the volume constraint:  the hemisphere inclusion condition of the Ashbaugh-Benguria result can be  relaxed to a weaker one, namely empty intersection with a geodesic ball of the prescribed volume.  Although we do not prove that this last inclusion result is sharp, for a mass less than the half of the sphere, we numerically identify a density with higher first eigenvalue than the corresponding geodesic ball and with support equal to the full sphere ${\mathbb S}^2$.
\end{abstract}
\maketitle
\section{Introduction}
Let $n \ge 2$ and denote by $\Sn$ the unit sphere of dimension $n$ in $\R^{n+1}$. Let $\Om \sq \Sn$ be an open, Lipschitz set of measure $m\in (0,|\Sn|)$. The eigenvalues of the Laplace-Beltrami operator with Neumann boundary conditions on $\Om$  are
$$0= \mu_0(\Om) \le  \mu_1(\Om) \le \dots\le \mu_k(\Om) \to +\infty,$$
multiplicity being counted. If $\Om$ is connected then $\mu_1(\Om)>0$.

For each $k \in \N$ we have
$$\mu_k(\Om) = \min_{S\in{\mathcal S}_{k+1}} \max_{u \in S\sm \{0\}} \frac{\int_\Om |\nabla u|^2 }{\int_\Om u^2 },$$
where ${\mathcal S}_k$ is the family of all subspaces of dimension $k$ in
$H^1(\Om)$, and the integral is defined with the canonical measure on $\Sn$. Then, there exists $u \in H^1(\Om)$, $u \not= 0$ such that
$$
\begin{cases}
-\Delta_\Sn u = \mu_k(\Om) u \mbox { in } \Om,\\
\frac{\partial u}{\partial \nu} = 0  \mbox { on } \partial \Om.
\end{cases}
$$
The aim of this paper is to find sharp upper bounds for the second eigenvalue provided that the volume of the set $\Om$ is prescribed. Shortly, we prove that
\begin{equation}\label{bmn01}
\mu_2(\Om) \le \mu_1(B^{m/2}),
\end{equation}
where   $m=|\Om|$ and  $B^a$ denotes a geodesic ball of volume $a$ in $\Sn$. This means that the set of volume $m$ with maximal second non trivial Neumann eigenvalue is the union of two disjoint geodesic balls of volume $\frac{m}{2}$.

Although this is our main motivation, the result we obtain is more general, in two directions. First, we shall prove the stronger inequality for the harmonic mean of the eigenvalues of order $2$ to $n$
\begin{equation}\label{bmn02}
\sum_{k=2}^n \frac{1}{\mu_k(\Om)} \ge \frac{n-1}{ \mu_1(B^{m/2})}.
\end{equation}
Second, the inequality \eqref{bmn02}  naturally extends to densities with prescribed mass. This is detailed in the last section, but we also refer to \cite{BH19} for an introduction to such problems.

A (somehow surprising) consequence of inequality \eqref{bmn01} is an extension of the result of Ashbaugh-Benguria stating that if  $\Om$ is contained in a hemisphere of $\Sn$ (and so has volume $m$ not larger than $ \frac{|\Sn|}{2}$), the sharp inequality occurs
\begin{equation}\label{eq_ashbaughbenguria}
\mu_1(\Om)\le  \mu_1(B^{m}).
\end{equation}
Indeed, inequality \eqref{bmn01} can be applied as follows: if $\Om_1, \Om_2$ are two disjoint, smooth open subsets of $\Sn$, then
$$\mu_2(\Om_1 \cup \Om_2) \le  \mu_1\left(B^{\frac{1}{2}(|\Om_1|+|\Om_2|)}\right).$$
This means that
$$\min \{\mu_1(\Om_1), \mu_1(\Om_2)\} \le  \mu_1\left(B^{\frac{1}{2}(|\Om_1|+|\Om_2|)}\right).$$
In particular, this implies that the Ashbaugh-Benguria inequality \eqref{eq_ashbaughbenguria} holds when $\Om$ lies in in the complement of a geodesic ball of volume $m$.  The main explanation of the "ease" with which we obtain this result relies on the richness of the class of new test functions we build for testing the second eigenvalue. It is a challenge to understand whether or not this last inclusion condition can be completely dropped; we comment this issue in Section \ref{sec_density}.  

Let us briefly recall the Euclidean context of these questions. In 1954 Szeg\"o \cite{Sz54} proves that in $\R^2$, the inequality
$$\frac{1}{\mu_1(\Om)} + \frac{1}{\mu_2(\Om)}  \ge  \frac{2}{\mu_1(B^{m}_{Euc})}$$
holds in the class of simply connected smooth domains, where $B^m_{Euc}$ is the Euclidian ball of measure $m=|\Om|$. Two years later, Weinberger \cite{We56} proves that $|\Om|^\frac{2}{n} \mu_1(\Om)$ is maximized by the ball without any topological constraint. While the proof of Szeg\"o is based on conformal mappings and  transplantation of the eigenfunctions of the disc, Weinberger builds a suitable set of test functions orthogonal on constants by extending the eigenfunctions of the ball and taking restrictions to the set $\Om$.

In 2009 Girouard, Nadirsahvili and Polterovich prove that in $\R^2$, the second Neumann eigenvalue is maximized by the union of two disjoint balls of mass $\frac{m}{2}$ in the class of simply connected smooth domains. In 2019, the first author and Henrot \cite{BH19} remove the dimensional and the topological constraint and, moreover, prove that the result continues to be valid in the larger class of densities. While the two dimensional result can somehow be related to the proof of Szeg\"o, being based on conformal transplantation, the $n$-dimensional one comes from the construction of Weinberger, which is refined and complemented by a fine topological argument. This last result was later on extended in the hyperbolic space by Freitas and Laugesen \cite{FrLa20}.
To finish the picture in the Euclidean space, it has been conjectured by Ashbaugh and Benguria in \cite{AsBe93} that the inequality
\begin{equation}\label{bmn03}
\sum_{k=1}^{j} \frac{1}{\mu_k(\Om)} \ge \frac{j}{ \mu_1(B^m)}
\end{equation}
holds true in the Euclidean space $\R^n$, for $j=n$.
In  2018, Wang and Xia proved  a weaker version of the inequality, for $j=n-1$. Although this is not the conjectured result, it still gives a stronger inequality than the Weinberger one in dimension $n \ge 3$.

On spheres, the first results are  due to Chavel \cite{Ch80} and to Ashbaugh and Benguria \cite{AB95}. Refining the Weinberger argument, Ashbaugh and Benguria  prove that given $0<m< \frac{|\Sn|}{2}$ among all subsets of a hemisphere of volume $m$, the geodesic ball is the unique maximizer. A similar assertion for domains allowed to go beyond the hemisphere is generally an open question (even if their volumes are not larger than $\frac{|\Sn|}{2}$). We refer to  \cite[Remark (2) after Theorem 5.1]{AB95} and \cite[Remark (2), p. 1085]{AsBe01} for some ideas to extend the inequality to domains on the whole sphere. However, those results are typically working only for subclasses of domains which have, in some sense, their mass balanced on sections of the sphere centered at the "center of mass" point. We comment in Section \ref{sec_density} the question of completely removing any inclusion constraint.

If $\frac{|\Sn|}{2}\le m <|\Sn|$ the inequality for $\mu_1$ is not well understood and it is likely to fail, at least for some values of $m$. The behaviour of the the first  Neumann eigenvalue on geodesic balls of radius larger than $\frac\pi2$ studied in \cite{AB95} support this idea. For instance, the first Neumann eigenvalue of the hemisphere and of the full sphere coincide, so that the monotonicity of the first eigenvalue on geodesic balls with respect to the measure fails to be true, for values of $m$ above the half of the sphere. This is a strong indicator that the inequality may not be true provided the volume is larger than $\frac{|\Sn|}{2}$. We comment this issue in Section \ref{sec_density} and refer to \cite{Ma22} for some numerical computation in support to this assertion.

We point out the paper \cite{FaWe18}, where Fall and Weth analyze critical sets for $\mu_1$ on the sphere.
As well, recently, in \cite{BBC20} the authors prove on spheres an extension of the result of Wang and Xia \cite{WaXi18} for harmonic means, still under the constraint on $\Om$ to be included in a hemisphere.

Here is our main result.

\begin{theorem}\label{bmn05}
Let $\Om\subset \Sn$ be an open, Lipschitz set. Then
\[\sum_{i=2}^{n}\frac{1}{\mu_i\left(\Om\right)}\geq
\sum_{i=2}^{n}\frac{1}{\mu_i\left(B^{|\Om|/2}\sqcup B^{|\Om|/2}\right)}\left(=\frac{n-1}{\mu_1\left(B^{|\Om|/2}\right)}\right).\]
Equality is attained when $\Om$ is  the union of two equal, disjoint  geodesic balls.
\end{theorem}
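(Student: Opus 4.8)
The plan is to adapt the Weinberger--Ashbaugh--Benguria transplantation scheme to produce a multidimensional family of test functions orthogonal to constants, simultaneously for $\mu_2,\dots,\mu_n$, and to extract the harmonic mean inequality from it. Let $\phi$ denote the first nontrivial Neumann eigenfunction of the geodesic ball $B^{m/2}$, normalized and written radially as $\phi = g(\theta)\,\omega$ where $\omega$ ranges over a coordinate axis of $\R^{n+1}$ (so that $g$ solves the associated ODE on $[0,R]$ with $R$ the radius of $B^{m/2}$, $g(0)=0$, $g'(R)=0$). The key construction is: for each point $p\in\Sn$, consider the map $G_p\colon\Sn\to\R^{n+1}$ built from $g$ composed with the geodesic distance to $p$ (and extended past radius $R$ by the constant $g(R)$), i.e. $G_p(x) = g\bigl(\min(d(x,p),R)\bigr)\,\frac{\exp_p^{-1}(x)}{|\exp_p^{-1}(x)|}$ suitably interpreted, so that each of the $n$ components of $G_p$ restricted to $\Om$ is a candidate test function. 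The point $p$ is a free parameter living on $\Sn$; together with the $n$ ambient components this yields an $(n+1)$-parameter family, and a topological/degree argument (a hedgehog-type or Brouwer fixed-point argument, as in \cite{BH19}) is used to choose $p=p_\Om$ so that all $n$ relevant components of $G_{p_\Om}$ are orthogonal to the constants on $\Om$ — the extra parameter absorbs the center-of-mass obstruction that forced the hemisphere hypothesis in \cite{AB95}.

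Once $p=p_\Om$ is fixed, write the $n$ test functions as $v_j$, $j=1,\dots,n$, and use them in the variational characterization of the first $n-1$ nontrivial eigenvalues. The standard averaging trick gives, for a suitable relabeling,
\[
\sum_{j=2}^{n}\mu_j(\Om)\int_\Om v_j^2 \;\le\; \sum_{j=2}^{n}\int_\Om |\nabla_\Sn v_j|^2 ,
\]
but to reach the \emph{harmonic} mean one instead argues pointwise: after diagonalizing the matrix $\bigl(\int_\Om \nabla v_i\cdot\nabla v_k\bigr)$ against $\bigl(\int_\Om v_i v_k\bigr)$ and using the min-max inequality $\sum 1/\mu_j \ge \sum (\int v_j^2)/(\int|\nabla v_j|^2)$ along the diagonalizing basis, it suffices to prove the pointwise bound
\[
\sum_{j=1}^{n} v_j(x)^2 \;\ge\; c\sum_{j=1}^{n}|\nabla_\Sn v_j(x)|^2 \quad\text{on }\Om,
\]
with $c = 1/\mu_1(B^{m/2})$, equivalently $\sum_j |\nabla v_j|^2 \le \mu_1(B^{m/2})\sum_j v_j^2$. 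By the spherical symmetry of the construction, at a point $x$ at geodesic distance $r=d(x,p_\Om)$ one computes $\sum_j v_j^2 = G(r)^2$ and $\sum_j|\nabla v_j|^2 = g'(r)^2 + (n-1)\frac{G(r)^2}{\text{(trig factor)}^2}$ for $r\le R$, where $G=g$ on $[0,R]$, and for $r>R$ the gradient term collapses because $G$ is constant there. Thus the desired pointwise inequality reduces to two facts: (i) $g'(r)^2 + (n-1)\operatorname{sn}(r)^{-2}g(r)^2 \le \mu_1(B^{m/2})\,g(r)^2$ for $r\le R$ (where $\operatorname{sn}$ is the spherical sine $\sin r$), which follows from the radial eigenvalue ODE for $\phi$ together with monotonicity properties of $g$ and $g/\operatorname{sn}$ established in \cite{AB95}, Chavel-type comparison; and (ii) the trivial inequality on $r>R$. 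A genuine subtlety is that the test functions $v_j$ are only Lipschitz (because of the cutoff at radius $R$), so one must check they lie in $H^1(\Om)$ and that the integrations by parts are legitimate — this is routine but must be stated.

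The main obstacle I expect is the combination of two things. First, the degree-theoretic selection of the center $p_\Om$: one needs the map $p\mapsto\bigl(\int_\Om G_p\bigr)\in\R^{n+1}$ (or a projectivized version) to have a zero, which requires a careful continuity/antipodal argument, handling the non-smoothness of $G_p$ in $p$ and the possibility that $\Om$ is disconnected or "wraps around" the sphere; this is precisely the ``fine topological argument'' of \cite{BH19} transplanted to $\Sn$, and getting it to work without the hemisphere constraint is the crux. Second, establishing the sharp pointwise gradient bound (i) uniformly for balls of volume up to $|\Sn|/2$ — here one must use the known qualitative behavior of the first Neumann eigenfunction of geodesic balls (sign of $g'$, concavity/monotonicity of $g/\sin r$) from \cite{AB95}, and verify that the ``bad'' regime $R>\pi/2$ does not occur because we only ever use $B^{m/2}$ with $m/2 < |\Sn|/2$, i.e. $R<\pi/2$ strictly — this is exactly why the statement is clean for all $m\in(0,|\Sn|)$ even though the single-ball inequality for $\mu_1$ is delicate. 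Finally, the equality case: if equality holds, the pointwise inequalities must be equalities a.e. on $\Om$, forcing each $v_j$ to be a genuine eigenfunction on each component and the components to be geodesic balls of volume exactly $m/2$, which pins down $\Om = B^{m/2}\sqcup B^{m/2}$ up to isometry.
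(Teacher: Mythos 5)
There are two genuine gaps here, and they are the heart of the matter rather than technicalities.

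First, your test functions are only made orthogonal to constants. That suffices to test $\mu_1(\Om)$, but the theorem concerns $\sum_{i=2}^{n}1/\mu_i(\Om)$, which requires test functions orthogonal to the constants \emph{and} to the first nontrivial eigenfunction $u_1$ of $\Om$; without that, the min--max principle (even after your Gram-matrix diagonalization) only yields bounds in which $\mu_1(\Om)$ appears, i.e.\ an Ashbaugh--Benguria/Wang--Xia type inequality with the sum starting at $i=1$. Your step ``$\sum_{j=2}^{n}\mu_j\int v_j^2\le\sum_{j=2}^{n}\int|\nabla v_j|^2$'' is therefore unjustified. Moreover your degree argument uses only one point $p\in\Sn$ ($n$ parameters) to kill the $n$ constraints $\int_\Om G_p=0$; there are no parameters left to kill the further $n$ constraints $\int_\Om u_1 G_p=0$. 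The paper's key topological result (Theorem \ref{bmn08}) is built precisely to make both packages of constraints vanish simultaneously, and this forces a $2n$-dimensional parameter space $(z,w)\in\Sn\times\Sn$ together with an equivariant mod-$4$ zero count for a vector field --- not a Brouwer/antipodal argument on a single copy of $\Sn$.

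Second, your construction is centered at a single point and would transplant onto a single geodesic ball $B^{m/2}$, whose volume is $m/2<|\Om|=m$: the excess mass $\Om\setminus B_{p,R}$ has nowhere to go, so the comparison of Rayleigh quotients breaks down (a single center with the matching volume would instead lead to $\mu_1(B^{m})$, not $\mu_1(B^{m/2})$). The paper resolves this with the folding map $F_a$, which reflects $a^+$ onto $a^-$: the test functions are $w\mapsto g((F_aw)_{n+1})(\xi\cdot F_aw)$, the transplanted density $1_\Om+1_{R_a(a^+\cap\Om)}$ is bounded by $2$, and the target configuration is the union of two geodesic balls of volume $m/2$ (folded into $a^-$ with multiplicity two) --- this is exactly what produces $\mu_1(B^{m/2})$ on the right-hand side. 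Your pointwise gradient bound and the observation that $R<\pi/2$ are correct and do appear in the paper, but they cannot rescue the argument without these two ingredients. A smaller point: even once orthogonality to $1$ and $u_1$ is secured, the paper still needs a Borsuk--Ulam selection of the frame $\xi_1,\dots,\xi_n$ so that $\varphi_{\xi_i}\perp u_2,\dots,u_i$, in order to distribute the $n$ test functions among $\mu_2,\dots,\mu_{n+1}$; your diagonalization is a plausible substitute for that step only. Finally, the theorem asserts only that equality \emph{is attained} by two equal disjoint balls, not the uniqueness you claim to deduce.
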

 Note that the hemisphere inclusion hypothesis is not imposed in this result. We work with domains on the full sphere and with arbitrary measure.
 If $\Om$ has three or more connected components, then $\mu_2(\Om)$ equals to $0$ and the inequality is trivially true. The relevant cases are when $\Om$ is either connected or has two connected components.

\smallskip
As a consequence,  $\mu_2$ is maximal on two disjoint geodesic balls of half measure. Indeed, we get the following.
\begin{theorem}\label{bmn06} Let $\Om\subset \Sn$ be an open Lipschitz set, then
\[\mu_2\left(\Om\right)\leq \mu_1\left(B^{|\Om|/2}\right).\]
\end{theorem}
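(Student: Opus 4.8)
The plan is to obtain Theorem \ref{bmn06} as a direct consequence of Theorem \ref{bmn05}, the only delicate point being the treatment of the case $\mu_2(\Om)=0$. So I would split into two cases according to the number of connected components of $\Om$.

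First, if $\Om$ has three or more connected components, then taking constant functions supported on three distinct components produces a three-dimensional subspace of $H^1(\Om)$ on which the Rayleigh quotient vanishes, hence $\mu_2(\Om)=0$. Since $B^{|\Om|/2}$ is connected we have $\mu_1(B^{|\Om|/2})>0$, so the claimed inequality $\mu_2(\Om)\le\mu_1(B^{|\Om|/2})$ holds trivially. It therefore suffices to treat the case where $\Om$ has at most two connected components, in which $\mu_2(\Om)>0$ and
\[0<\mu_2(\Om)\le\mu_3(\Om)\le\dots\le\mu_n(\Om).\]

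Under this assumption, for every $i\in\{2,\dots,n\}$ one has $\frac{1}{\mu_i(\Om)}\le\frac{1}{\mu_2(\Om)}$, so that
\[\frac{n-1}{\mu_2(\Om)}=\sum_{i=2}^{n}\frac{1}{\mu_2(\Om)}\ge\sum_{i=2}^{n}\frac{1}{\mu_i(\Om)}\ge\frac{n-1}{\mu_1(B^{|\Om|/2})},\]
where the last inequality is precisely the content of Theorem \ref{bmn05}. Dividing through by $n-1>0$ and taking reciprocals yields $\mu_2(\Om)\le\mu_1(B^{|\Om|/2})$, which is the assertion; the equality case follows from that of Theorem \ref{bmn05}, since two equal disjoint geodesic balls satisfy $\mu_2=\mu_3=\dots=\mu_n=\mu_1(B^{|\Om|/2})$.

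There is no genuine obstacle in this deduction: Theorem \ref{bmn06} is a soft corollary of Theorem \ref{bmn05}, obtained merely by bounding each term of the harmonic sum by its smallest term $1/\mu_2(\Om)$. All the substantive difficulty — building the enriched family of test functions for $\mu_2$, transplanting the eigenfunctions of the geodesic ball, and carrying out the topological and mass-balancing argument needed to simultaneously control $\mu_2,\dots,\mu_n$ — is already absorbed into the proof of Theorem \ref{bmn05}. The only point one must not overlook is the degenerate configuration of three or more components, handled at the very start.
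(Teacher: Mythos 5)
Your deduction is logically valid, and it is exactly the reduction the paper itself flags when it calls Theorem \ref{bmn06} ``de facto'' a consequence of Theorem \ref{bmn05}: once the harmonic-mean inequality is available, bounding each term $1/\mu_i(\Om)$, $i\geq 2$, by $1/\mu_2(\Om)$ gives the claim, and your preliminary disposal of the case of three or more connected components (where $\mu_2(\Om)=0$ and the statement is trivial) correctly avoids any division by zero. There is no circularity, since the paper's proof of Theorem \ref{bmn05} in Section \ref{bmnsec4} nowhere uses Theorem \ref{bmn06}. The paper, however, deliberately proves Theorem \ref{bmn06} by an independent, shorter argument: it applies the topological result (Theorem \ref{bmn08}) with $\rho=1_\Om$ and $\sigma=u_1 1_\Om$ to produce a folding point $(a,z)$ for which all $n$ test functions $v\mapsto g(z\cdot F_a v)(F_a v)_i$ are simultaneously orthogonal to $1$ and $u_1$, sums the resulting Rayleigh-quotient bounds over the standard coordinate directions, and concludes by mass transplantation using the monotonicity of $J^2$ and of $b$. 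That direct route avoids the extra machinery required for Theorem \ref{bmn05} --- in particular the Borsuk--Ulam selection of an adapted orthonormal basis $\xi_1,\dots,\xi_n$ ensuring $\varphi_{\xi_i}\perp u_2,\dots,u_i$ and the finer term-by-term bookkeeping --- which is precisely why the authors present Theorem \ref{bmn06} first. So your argument is correct but shifts all the substantive work into Theorem \ref{bmn05}; as a standalone proof of Theorem \ref{bmn06} it is heavier than necessary, though perfectly rigorous.
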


The proof of Theorems \ref{bmn05} and \ref{bmn06} is going in the same direction as \cite{BH19}. We build a pack of $n$ test functions which are orthogonal to both constants and on the first (unknown) eigenfunction of a set $\Om$. While the idea of the construction is similar to the one of  \cite{BH19} being based on folding (Weinberger based) Ashbaugh-Benguria test functions across a hyperplane containing the center of the sphere, the main difficulty is related to the validity of the topological argument. Roughly speaking, the key difficulty comes from the non uniqueness of the "center of mass" point of an arbitrary domain on a sphere; a domain may have multiple (even infinite) family of such centers for packages of suitable test functions. The uniqueness of such a point was a crucial ingredient in both topological arguments in \cite{BH19} and \cite{FrLa20}.

\smallskip
A consequence of this last result is an extension of the result of Ashbaugh-Benguria on the first eigenvalue.
\begin{corollary}\label{bmn08.1} Let $m\in (0,|\Sn|/2)$ and let $B^m$ be a geodesic ball  of measure $m$  in $\Sn$.
Let $\Om \subset \Sn\sm B^m$ be an open Lipschitz set such that $|\Om|=m$. Then
\[\mu_1\left(\Om\right)\leq \mu_1\left(B^m\right).\]
\end{corollary}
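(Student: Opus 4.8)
The plan is to deduce Corollary~\ref{bmn08.1} directly from Theorem~\ref{bmn06} by a disjoint-union trick, exactly as sketched in the introduction. Suppose $\Om \subset \Sn\sm B^m$ is open and Lipschitz with $|\Om|=m$, where $B^m$ is a fixed geodesic ball of measure $m$. Since $\Om$ and $B^m$ are disjoint, we may form the open Lipschitz set $\Om\sqcup B^m \subset \Sn$, which has measure $2m$. Applying Theorem~\ref{bmn06} to this set gives
\[\mu_2\left(\Om\sqcup B^m\right)\leq \mu_1\left(B^{|\Om\sqcup B^m|/2}\right)=\mu_1\left(B^m\right).\]
So the whole matter reduces to showing that $\mu_2(\Om\sqcup B^m)\geq \min\{\mu_1(\Om),\mu_1(B^m)\}$, and then using the classical fact that $\mu_1(B^m)$ is, among geodesic balls, attained with the "right" multiplicity so that actually $\mu_1(B^m)$ is not too large — but in fact the cleaner route is the following.

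The key elementary step is the behaviour of Neumann eigenvalues under disjoint union. For two disjoint open sets $A, B$, the spectrum of $A\sqcup B$ is the union (with multiplicity, reordered) of the spectra of $A$ and of $B$. Since each of $\mu_0(A)=\mu_0(B)=0$, the value $0$ appears with multiplicity (at least) two in the spectrum of $A\sqcup B$, i.e. $\mu_0(A\sqcup B)=\mu_1(A\sqcup B)=0$. The next eigenvalue $\mu_2(A\sqcup B)$ is then the smallest strictly positive number among $\mu_1(A),\mu_2(A),\dots$ and $\mu_1(B),\mu_2(B),\dots$, hence
\[\mu_2(A\sqcup B)=\min\{\mu_1(A),\,\mu_1(B)\}.\]
Taking $A=\Om$, $B=B^m$ and combining with the inequality displayed above yields $\min\{\mu_1(\Om),\mu_1(B^m)\}\leq \mu_1(B^m)$, which is a tautology and therefore too weak. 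The correct reading is that the bound $\mu_2(\Om\sqcup B^m)\le \mu_1(B^m)$ says $\min\{\mu_1(\Om),\mu_1(B^m)\}\le\mu_1(B^m)$; to extract $\mu_1(\Om)\le\mu_1(B^m)$ one must rule out the case where the minimum is realized by $\mu_1(B^m)$ \emph{strictly below} $\mu_1(\Om)$. This cannot happen: if $\mu_1(\Om)>\mu_1(B^m)$ then $\min\{\mu_1(\Om),\mu_1(B^m)\}=\mu_1(B^m)$ and the inequality $\mu_2(\Om\sqcup B^m)=\mu_1(B^m)\le\mu_1(B^m)$ is satisfied, so Theorem~\ref{bmn06} gives no contradiction. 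Hence one genuinely needs the \emph{equality case} or a sharper input. The resolution, and the step I expect to be the real (though still short) obstacle, is to instead compare against a \emph{different} competitor: apply Theorem~\ref{bmn06} not to $\Om\sqcup B^m$ but to the union of $\Om$ with a geodesic ball $B^{|\Om|}$, noting that two equal disjoint geodesic balls of measure $|\Om|$ have $\mu_2$ equal to $\mu_1(B^{|\Om|})$; then the hypothesis $\Om\subset\Sn\sm B^m$ with $|\Om|=m$ guarantees the disjointness needed to form $\Om\sqcup B^m$ with $B^m$ that exact ball, and the monotonicity/identification $|\Om\sqcup B^m|/2=m$ closes the argument.

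Concretely, here is the clean deduction. Let $B=B^m$ be the ball with $\Om\cap B=\emptyset$ and $|B|=m=|\Om|$. By the disjoint-union computation above, $\mu_2(\Om\sqcup B)=\min\{\mu_1(\Om),\mu_1(B)\}$. By Theorem~\ref{bmn06} applied to $\Om\sqcup B$, whose measure is $2m$, we get $\mu_2(\Om\sqcup B)\le \mu_1(B^{m})=\mu_1(B)$. Therefore $\min\{\mu_1(\Om),\mu_1(B)\}\le\mu_1(B)$. If $\mu_1(\Om)\le\mu_1(B)$ we are done. The only remaining possibility is $\mu_1(\Om)>\mu_1(B)$; but then, since $\Om\sqcup B$ and $B\sqcup B$ have the same measure $2m$ while $\mu_2(B\sqcup B)=\mu_1(B)=\mu_2(\Om\sqcup B)$ would force, via the equality statement in Theorem~\ref{bmn05} (harmonic-mean rigidity, the case $n=2$ reading as a plain equality), that $\Om\sqcup B$ is a union of two equal disjoint geodesic balls — forcing $\Om$ itself to be a geodesic ball of measure $m$ and hence $\mu_1(\Om)=\mu_1(B)$, a contradiction. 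This rigidity input is exactly where Theorem~\ref{bmn05} (not merely Theorem~\ref{bmn06}) is used, and pinning it down carefully — in particular the case $n=2$, and checking the equality case genuinely characterizes two equal balls rather than merely bounding the harmonic mean — is the one point that needs care; everything else is the bookkeeping of eigenvalues of disjoint unions.
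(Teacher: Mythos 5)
You correctly diagnose the central difficulty: applying Theorem~\ref{bmn06} to $\Om\sqcup B^m$ with $|B^m|=|\Om|=m$ only yields the tautology $\min\{\mu_1(\Om),\mu_1(B^m)\}\le\mu_1(B^m)$. However, the fix you propose does not work as written. You invoke an ``equality case'' or ``rigidity'' statement for Theorem~\ref{bmn05}/\ref{bmn06}, namely that $\mu_2(\Om\sqcup B)=\mu_1(B^m)$ forces $\Om\sqcup B$ to be a union of two equal disjoint geodesic balls. The theorem as stated (and as proved) only asserts that equality \emph{is attained by} two equal disjoint balls, i.e.\ sharpness; it does not characterize the equality case, and extracting such a characterization from the mass-transplantation proof would be a genuinely new (and nontrivial) piece of analysis. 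So the step you yourself flag as ``the one point that needs care'' is in fact a gap, not a detail.

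The paper's actual proof avoids rigidity entirely by a perturbation. For $\eps\in(0,m)$ one has $\Om\cap B^{m-\eps}=\emptyset$, and Theorem~\ref{bmn06} applied to $\Om\sqcup B^{m-\eps}$ (of measure $2m-\eps$) gives
\[\min\left\{\mu_1(\Om),\,\mu_1\left(B^{m-\eps}\right)\right\}=\mu_2\left(\Om\sqcup B^{m-\eps}\right)\le\mu_1\left(B^{m-\tfrac{1}{2}\eps}\right).\]
Since $m\mapsto\mu_1(B^m)$ is continuous and \emph{strictly} decreasing on $(0,|\Sn|/2)$ (from \cite{AB95}), one has $\mu_1(B^{m-\eps})>\mu_1(B^{m-\eps/2})$, so the minimum cannot be realized by the ball term; hence $\mu_1(\Om)\le\mu_1(B^{m-\eps/2})$, and letting $\eps\to0$ concludes. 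You should replace your rigidity step by this monotonicity argument (or supply a proof of the equality characterization, which the paper does not contain).
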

In the result above, the hemisphere inclusion condition of Ashbaugh-Benguria is replaced by a weaker one, namely inclusion in the complement of a geodesic ball of measure $m$. In several other situations the inequality above continues to be true, as, for instance,  for every domain which is disjoint with one if its isometric images. 
 Let us also mention the recent paper \cite{LL22} where the authors use conformal methods to obtain $\mu_1(\Om)\leq \mu_1(B^m)$ for any simply connected set $\Om\subset\mathbb{S}^2$ under the condition $|\Om|=m\leq 0.94 \left|\mathbb{S}^2\right|$. \newline

The issue of dropping completely the inclusion constraint in Corollary \ref{bmn08.1} has not a clear answer and will be discussed in the last section. In fact, we numerically identify a density, with support on the full sphere ${\mathbb S}^2$ and mass less than $2\pi$ which has higher first eigenvalue than the corresponding geodesic ball. The main consequence of this observation is that no mass transplantation argument such as ours can work on the full sphere.\newline

The paper is organized as follows. In the next section  we recall some topological tools and prove the main technical result on which is based the construction of the test functions. The key argument relies on the control, along a continuous deformation, of the number of zeroes of a vector field build upon the test functions.   Section \ref{bmnsec3} is devoted to the proof of Theorem \ref{bmn06} and its corollaries, while   Section \ref{bmnsec4} is devoted to the proof of Theorem \ref{bmn05}. We choose to switch what should be the natural order of the proofs, since the proof of Theorem \ref{bmn06} is a direct  consequence of the topological arguments, while the proof of Theorem \ref{bmn05} requires some extra analysis which may hide the core ideas. Finally, in the last section we comment about possible extensions of those results and bring some numerical support for our assertions.

\section{Topological results}\label{bmnsec2}
\medskip
\noindent{\bf Notations.} We write $\Sn$ the unit sphere of dimension $n$ in $\R^{n+1}$ and set  $z\mapsto z_1,\hdots,z_{n+1}$ its coordinate functions. For any $a\in \Sn$, we let
\[a^-=\{z\in\Sn:z\cdot a<0\},\ a^\bot=\{z\in\Sn:z\cdot a=0\},\ a^+=\{z\in\Sn:z\cdot a>0\}\]
and denote
\[R_a(z)=z-2(z\cdot a)a,\]
\smallskip
\[F_a(z)=\begin{cases}R_a(z) & \text{ when }z\in a^+,\\ z & \text{ when }z\in a^-\sqcup a^\bot.\\ \end{cases}\]
Above $x \cdot y$ denotes the scalar product of $\R^{n+1}$ between the vectors $x$ and $y$ and  $\langle x,y\rangle$ denotes the space spanned by $x,y$.

For any $w,z\in\Sn$, we denote $\pi_z(w)=w-(w\cdot z)z$ the projection of $w$ on $T_z\Sn=z^\bot$, the     tangent hyperplane at the sphere in point $z$.\bigbreak
Let $|\cdot|$ denote the $n$-dimensional Hausdorff measure on $\Sn$. For any $m\in (0,|\Sn|)$, $r\in (0,\pi)$, and $p\in\Sn$, we let $B_{p,r}$ be the geodesic ball in $\Sn$ of center $p$ and radius $r$, $B_p^m$ the ball of center $p$ and measure $m$. When $p$ is not mentioned we may  implicitly assume that $p=e_{n+1}$ (as it will not matter).\bigbreak

Let $g\in\mathcal{C}^{0}([-1,1],\R_+)$ be a continuous even function such that $g>0$ on $(-1,1)$. We denote $G(t)=\int_0^t g(s) ds$ the primitive of $g$ which vanishes at $t=0$.\bigbreak

For any $\rho\in L^1(\Sn,\R)$, and $(a,z)\in\Sn\times\Sn$ we denote
\[E_\rho(z):=\int_{\Sn}\rho(v)G(z\cdot v)dv\]
and
\[\E_\rho(a,z):=\int_{\Sn}\rho(v)G(z\cdot F_a v)dv.\]
Notice in particular that $\nabla E_\rho(z)=\nabla_z\E_\rho(-z,z)$, where ``$\nabla_z$'' is the partial gradient in its second variable.

\medskip
\noindent{\bf Counting zeroes modulo 4.}
 Let $M$ be a compact manifold with boundary $\partial M$. Suppose that we have some smooth involution without fixed point $S:M\to M$, and a $(1,1)$ tensor $Q$ acting as a invertible linear transformation on each tangent space, meaning that for every $x\in M$ we have $Q(x)\in GL(T_xM)$.\bigbreak
We denote by $\X(M)$ be the set of continuous (tangent) vector fields $V$ on $M$ (with the natural $\mathcal{C}^0$ topology) such that
\[S_* V=QV,\]
meaning that for every $x\in M$, we have $dS(S^{-1}(x))V(S^{-1}(x))=Q(x)V(x)$. We also let
\[\X^*(M)=\{V\in\X(M):V_{|\partial M}\text{ does not vanish}\}.\]
When $V\in\X^*(M)$ is smooth ($\mathcal{C}^\infty$), we say it is nondegenerate if for every $x\in M$ such that $V(x)=0$, and any local chart $\varphi:\mathcal{B}\to M$ (where $\mathcal{B}$ is a ball in Euclidian space) with $\varphi(0)=x$, then $D(\varphi^*V)(0)$ is invertible (where $\varphi^*V=\left(\varphi^{-1}\right)_*V$).
\begin{lemma}\label{TopLemma}
There is a unique continuous function $I:\X^*(M)\to \mathbb{Z}/4\mathbb{Z}$ such that for every smooth nondegenerate vector field $V\in\X^*(M)$,
\[I(V)=\mathrm{Card}\left(\{x\in M:V(x)=0\}\right)\text{ mod }(4).\]
\end{lemma}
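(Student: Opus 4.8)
The plan is to \emph{define} $I$ on the dense set of smooth nondegenerate fields by the only possible formula, $I(V):=\mathrm{Card}(V^{-1}(0))\bmod 4$, to show that this number depends only on the path-component of $V$ in $\X^*(M)$, and then to extend it to all of $\X^*(M)$ by local constancy. The starting observations are that for such a $V$ the set $V^{-1}(0)$ is finite — it is closed in the compact $M$, disjoint from $\partial M$, and discrete by nondegeneracy — and that its points come in pairs exchanged by $S$. Indeed, writing the defining relation $dS(S^{-1}(x))V(S^{-1}(x))=Q(x)V(x)$ at the point $S(x)$ gives $dS(x)V(x)=Q(S(x))V(S(x))$, so invertibility of $Q$ yields $V(x)=0\iff V(S(x))=0$, and differentiating the same identity shows $x$ is nondegenerate iff $S(x)$ is; since $S$ has no fixed point these are genuine two‑element orbits, so $\mathrm{Card}(V^{-1}(0))$ is even. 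Uniqueness of $I$ is then immediate: smooth nondegenerate fields are $\mathcal{C}^0$-dense in $\X^*(M)$, and $\X^*(M)$, being open in the Banach space $\X(M)=\ker\big(V\mapsto S_*V-QV\big)$ (a closed subspace of $\mathcal{C}^0$-sections of $TM$), is locally path-connected, so a continuous $\Z/4\Z$-valued function is determined by its restriction to any dense subset.

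For existence it suffices to prove: if $V_0,V_1$ are smooth nondegenerate elements of $\X^*(M)$ in the same path-component, then $\mathrm{Card}(V_0^{-1}(0))\equiv\mathrm{Card}(V_1^{-1}(0))\bmod 4$. I would join them by a path in $\X^*(M)$ and perturb it, rel endpoints and without leaving $\X^*(M)$, so that the homotopy $(x,t)\mapsto V_t(x)$ is transverse to the zero section of $TM$ over $M\times[0,1]$. Then $W:=\{(x,t):V_t(x)=0\}$ is a compact $1$-manifold with $\partial W=(V_0^{-1}(0)\times\{0\})\sqcup(V_1^{-1}(0)\times\{1\})$, since the path staying in $\X^*(M)$ prevents any zero from reaching $\partial M\times[0,1]$. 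The map $S\times\mathrm{id}$ is a fixed-point-free involution of $W$; since every continuous self-map of a closed interval has a fixed point, no arc-component of $W$ is $S$-invariant, hence the arc-components occur in $S$-pairs. Therefore $|\partial W|=\mathrm{Card}(V_0^{-1}(0))+\mathrm{Card}(V_1^{-1}(0))$ equals four times the number of such pairs, in particular $\equiv 0\bmod 4$; combined with the evenness of each summand this forces $\mathrm{Card}(V_0^{-1}(0))\equiv\mathrm{Card}(V_1^{-1}(0))\bmod 4$. One then sets $I$ equal to this common value on each path-component of $\X^*(M)$ — every component contains smooth nondegenerate fields by density — and $I$, being constant on path-components of a locally path-connected space, is continuous. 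This is also the step that explains why the modulus is $4$ and not $2$: the symmetry $S$ makes births and deaths of zeros occur four at a time.

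The step I expect to be the real obstacle is that all of the transversality used above — both the mollification–plus–perturbation giving density and the perturbation of the homotopy — must be carried out \emph{inside} the constrained space $\X(M)=\{S_*V=QV\}$, where one no longer has arbitrary local perturbations at one's disposal. The clean way to organize this is to descend to the free quotient $\ov M:=M/S$, a compact $n$-manifold with boundary double-covered by $M$: under the natural compatibility of $Q$ with $S$ (valid in our applications, the condition being $dS(S(x))\,Q(S(x))^{-1}\,dS(x)=Q(x)$), the relation $S_*V=QV$ identifies $\X(M)$ with the continuous sections of a rank-$n$ bundle $E\to\ov M$, and $V^{-1}(0)$ is exactly the preimage of the zero set of the corresponding section $\ov s$, so $\mathrm{Card}(V^{-1}(0))=2\,\mathrm{Card}(\ov s^{-1}(0))$. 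Density of smooth sections and Thom parametric transversality for sections of $E$ are then completely standard, and the whole lemma reduces to the classical fact that the parity of the number of zeros of a generic section of $E$ not vanishing on $\partial\ov M$ is a homotopy invariant — with $I(V)$ equal to twice that parity.
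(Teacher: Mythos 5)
Your proof is correct and follows essentially the same route as the paper: density of smooth nondegenerate fields, a transversal homotopy producing a compact $1$-manifold of zeros, and the fixed-point-free involution $S$ pairing up its arc-components to force invariance of the count mod $4$ (your bookkeeping via $|\partial W|\equiv 0 \bmod 4$ together with evenness of each zero set is equivalent to the paper's classification of arcs by endpoint type). Your final reduction to sections of a bundle over the quotient $M/S$ does not appear in the paper, but it is a clean way to justify the smoothing and parametric transversality arguments \emph{inside} the constrained space $\X(M)$ — a point the paper leaves implicit — and the compatibility condition on $Q$ that it requires does hold for the tensor $Q$ used in the proof of Theorem \ref{bmn08}.
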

In particular $I$ is invariant by homotopy in $\X^*(M)$. This result is similar to \cite[§4]{M97}, \cite[Ch. 7.4]{S11}; we give a proof in the appendix.\newline

The main  technical result on which is based the construction of test functions is the following.
\begin{theorem}\label{bmn08}
Let $\rho,\sigma\in L^1(\Sn,\R)$ with $\rho\geq 0$. Then there exists $(a,z)\in\Sn\times \Sn$ such that $z$ is a critical point of both $\E_\rho(a,\cdot)$  {and} $\E_\sigma(a,\cdot)$.
\end{theorem}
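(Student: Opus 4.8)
The plan is to recast the statement as the existence of a zero of a continuous tangent vector field on a compact manifold carrying a fixed-point-free involution, and then to feed it into Lemma~\ref{TopLemma}. First I would reformulate. As $g$ is continuous, $G\in\mathcal C^1$, so for fixed $a$ the function $\E_\rho(a,\cdot)$ is $\mathcal C^1$ on $\Sn$ with $\nabla_z\E_\rho(a,z)=\pi_z\big(\int_{\Sn}\rho(v)g(z\cdot F_av)F_av\,dv\big)$, and likewise for $\sigma$; since $(a,v)\mapsto F_av$ is jointly continuous (the two branches of $F_a$ agree on $a^\bot$, where $R_av=v$), dominated convergence makes $\nabla_z\E_\rho$ and $\nabla_z\E_\sigma$ jointly continuous on $\Sn\times\Sn$. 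Thus the claim is that the pair $(a,z)\mapsto(\nabla_z\E_\rho(a,z),\nabla_z\E_\sigma(a,z))$ vanishes somewhere — $2n$ scalar equations for the $2n$ parameters $(a,z)$, so one expects isolated solutions. The structural input is the identity $F_{-a}=R_a\circ F_a$ (both maps reflect exactly one open half-sphere and fix the rest, including $a^\bot$), which gives $\E_\tau(-a,w)=\E_\tau(a,R_aw)$ and, after differentiating in $w$,
\[\nabla_z\E_\tau(-a,R_az)=R_a\,\nabla_z\E_\tau(a,z),\qquad\tau\in\{\rho,\sigma\}.\]
Hence the smooth involution $S(a,z)=(-a,R_az)$ has no fixed point and intertwines the two gradient fields through $dS$ and the reflections $R_a$. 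Assembling them — this is where the precise choice of the manifold $M$ (fashioned from $\Sn\times\Sn$, the folding directions $a$ and admissible centres $z$) and of the twist $Q$ is made, the point being to house the two $\nabla_z$-fields, which both live over the $z$-variable, inside $TM$ — produces a vector field $V\in\X(M)$. Since $G$ is only $\mathcal C^1$, $V$ is only $\mathcal C^0$: one mollifies $g$ first, argues with the resulting smooth data, and passes to the limit using continuity of $I$.

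Next, one checks that $V\in\X^*(M)$, i.e.\ $V$ does not vanish on $\partial M$; this is where $\rho\ge 0$ enters, positivity preventing the $\rho$-centering condition from being satisfied on the boundary locus. By Lemma~\ref{TopLemma}, $I(V)\in\Z/4\Z$ is then invariant along homotopies inside $\X^*(M)$, and it suffices to prove $I(V)\neq 0$. Indeed a nowhere-vanishing field in $\X^*(M)$ can be $\mathcal C^0$-approximated (after symmetrization, to keep the relation $S_*V=QV$) by a nowhere-vanishing smooth field, which is vacuously nondegenerate with $I=0$, so by continuity $I(V)=0$; contrapositively, $I(V)\neq 0$ forces $V$ to vanish at some $(a,z)$, which is exactly the pair sought.

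Finally, to evaluate $I(V)$ I would deform the data — keeping $\rho\ge 0$ and staying in $\X^*(M)$, and also deforming $g$ (for instance to $g\equiv 1$) — to an explicit model $(\rho_0,\sigma_0)$ (a concrete positive density and $\sigma_0$ a first spherical harmonic) for which $V_0$ has finitely many nondegenerate zeros that can be listed by hand, so that $I(V)=I(V_0)$ is that number $\bmod\,4$. The fixed-point-free involution $S$ does the essential work here: the zeros come in $S$-orbits and the $Q$-twisted bookkeeping forces the count to be $\not\equiv 0\pmod 4$, whereas the plain unsigned count is uninformative — for $n$ even the relevant Euler class already vanishes, which is precisely why the classical degree argument of \cite{BH19,FrLa20} has to be replaced by a count modulo~$4$; the non-uniqueness of the centre of mass is reflected exactly in the non-triviality of the $S$-action. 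This model computation, together with checking that the chosen deformation never leaves $\X^*(M)$, is the step I expect to be the genuine obstacle; everything before it is soft.
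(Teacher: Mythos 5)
Your architecture is exactly the paper's: encode the two criticality conditions as a tangent vector field on a compact manifold with boundary carrying a free involution, invoke Lemma~\ref{TopLemma}, and deform to a model whose zeroes can be listed by hand. The symmetry identity $\nabla_z\E_\tau(-a,R_az)=R_a\,\nabla_z\E_\tau(a,z)$ and the role of $\rho\ge0$ on the boundary locus are both correctly identified. But the two steps you explicitly defer are precisely the substance of the proof, so as written there is a genuine gap. For the construction of $(M,S,Q)$: the paper reparametrizes by $(z,w)$ with $z\neq w$, $a(z,w)=\frac{w-z}{|w-z|}$ (so $z\in a^-$, $w=R_az$), takes $M=(\Sn)^2\setminus D$ with $D$ a small tubular neighbourhood of the diagonal, and sets
\[\V_{\rho,\sigma}(z,w)=\bigl(\nabla_z\E_\rho(a,z),\,R_a\nabla_z\E_\sigma(a,z)\bigr)\in T_z\Sn\times T_w\Sn,\quad S(z,w)=(w,z),\quad Q(z,w)(h,k)=(R_ak,R_ah).\]
Since $z\cdot a=-\tfrac12|w-z|$, the boundary $\partial M$ is exactly the locus where $a\cdot z$ is small, and there the first component is pinned down by $a\cdot\nabla_z\E_\rho(a,z)=\int_{\Sn}\rho(v)g(z\cdot v)(a\cdot F_av)\,dv<0$ (using $a\cdot F_av\le 0$, $\rho\ge0$, $\int\rho>0$); this must hold uniformly along the whole homotopy to the model, which is why the model density replacing $\rho$ must itself be nonnegative.

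The decisive missing piece is the value of $I$ on the model: nothing in your outline rules out $I(V_0)=0\bmod 4$, in which case the method yields nothing. The paper takes $\rho_0=1_{B_{p,r}}$, $\sigma_0=1_{B_{q,r}}$ with $p=-q$, $r<\pi/4$, and keeps the original $g$. A genuinely nontrivial geometric argument (that $\nabla_z\E_{1_{B_{p,r}}}(a,z)=0$ forces $p\in\langle a,z\rangle$, and that otherwise the midpoint $c=\frac{z+R_az}{|z+R_az|}$ must lie in $\overline{B_{p,r}}$ unless $p\in\{z,R_az\}$) shows the zeroes are exactly $(p,q)$ and $(q,p)$; near $(p,q)$ the field reduces to $\bigl(G_r'(z\cdot p)\pi_z(p),G_r'(w\cdot q)\pi_w(q)\bigr)$, locally homotopic to $-\mathrm{Id}$, so each $S$-orbit point counts once and $I=2\bmod 4$. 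Your alternative model ($g\equiv1$, $\sigma_0$ a degree-one spherical harmonic) is not obviously workable: with $g\equiv 1$ one gets $\nabla_z\E_{\tau}(a,z)=\pi_z\bigl(\int\tau(v)F_av\,dv\bigr)$, whose zero set as $(a,z)$ varies need not be finite or nondegenerate, and you would have to carry out that count before the proof closes. Two minor points: the reduction to $\int_{\Sn}\rho>0$ should be stated (the case $\rho=0$ is trivial but otherwise breaks the boundary estimate), and no mollification of $g$ is needed, since $\V_{\rho,\sigma}$ is already continuous and Lemma~\ref{TopLemma} applies to continuous fields.
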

Criticality above reads
$$\int_{\Sn}\rho(v)g(z\cdot F_a (v))\pi_zF_a (v)dv= \int_{\Sn}\sigma(v)g(z\cdot F_a (v))\pi_z F_a (v)dv=0.$$
The remaining part of the section is devoted to the proof of this theorem.
\begin{proof}
 This is direct for any $a$ when $\rho=0$, so we suppose without loss of generality that $\int_{\Sn}\rho>0$. We begin with a computation which  will be useful in several steps of the proof.
\clem\begin{lemma}
Let $p \in \Sn$. For any $r\in (0,\pi/2)$, let us denote $G_r: [-1,1] \to \R$ given by
\[G_r(p\cdot z)= E_{1_{B_{p,r}}}(z).\]
Then, $G_r\in \mathcal{C}^0([-1,1],\R)\cap\mathcal{C}^1((-1,1),\R)$ and satisfies $G_r'>0$ in $(-1,1)$. Moreover,
$$G_r'(p\cdot z)\pi_z(p)=\nabla E_{1_{B_{p,r}}}(z).$$

\end{lemma}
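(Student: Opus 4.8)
The plan is to prove the four claims in turn: $G_r$ is well defined, it has the stated regularity, the gradient identity holds, and $G_r'>0$ on $(-1,1)$; only the last point is delicate.

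\emph{Well-definedness, regularity and the gradient formula.} For every orthogonal map $\mathcal O$ of $\R^{n+1}$ fixing $p$ one has $\mathcal O(B_{p,r})=B_{p,r}$, so the substitution $v\mapsto \mathcal Ov$ in $E_{1_{B_{p,r}}}(z)=\int_{B_{p,r}}G(z\cdot v)\,dv$ yields $E_{1_{B_{p,r}}}(\mathcal Oz)=E_{1_{B_{p,r}}}(z)$. Since two points of $\Sn$ on the same parallel $\{z\cdot p=\mathrm{const}\}$ are interchanged by such an $\mathcal O$, the integral depends only on $z\cdot p$, which defines $G_r$ on $[-1,1]$. As $g=G'$ is continuous, hence bounded, on $[-1,1]$, differentiation under the integral sign shows $z\mapsto E_{1_{B_{p,r}}}(z)$ is $\mathcal C^1$ on $\Sn$ with $\nabla E_{1_{B_{p,r}}}(z)=\int_{B_{p,r}}g(z\cdot v)\,\pi_z(v)\,dv$; in particular $G_r\in\mathcal C^0([-1,1])$. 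On $\Sn\setminus\{\pm p\}$ the map $z\mapsto p\cdot z$ is a submersion onto $(-1,1)$ (gradient $\pi_z(p)\neq 0$), so composing $E_{1_{B_{p,r}}}$ with a local smooth section --- for instance a meridian $t\mapsto z_t$ with $z_t\cdot p=t$ --- shows $G_r\in\mathcal C^1((-1,1))$. The identity $\nabla E_{1_{B_{p,r}}}(z)=G_r'(p\cdot z)\,\pi_z(p)$ for $z\neq\pm p$ is then the chain rule applied to $E_{1_{B_{p,r}}}(z)=G_r(p\cdot z)$, using $\nabla_z(p\cdot z)=\pi_z(p)$; at $z=\pm p$ both sides vanish (the right one since $\pi_{\pm p}(p)=0$, the left one by the rotational symmetry of $B_{p,r}$ about $p$).

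\emph{Strict positivity of $G_r'$.} Fix $t\in(-1,1)$; since $g$ is even, $G$ is odd, hence $G_r$ is odd and $G_r'$ even, so we may take $t\ge 0$. Let $\theta=\arccos t\in(0,\pi/2]$ and write $z_t=tp+\sqrt{1-t^2}\,\omega_0$ with $\omega_0\perp p$ a unit vector, so $G_r(t)=\int_{B_{p,r}}G(z_t\cdot v)\,dv$; differentiating (valid since $g$ is bounded) and using $\partial_t z_t=p-\tfrac{t}{\sqrt{1-t^2}}\,\omega_0$ gives
\[
G_r'(t)=\frac1{1-t^2}\int_{B_{p,r}}g(z_t\cdot v)\,\bigl(v\cdot p-t\,(z_t\cdot v)\bigr)\,dv .
\]
I would evaluate this by the coarea formula for the geodesic distance $\delta(v)=d_{\Sn}(z_t,v)$. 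On the geodesic sphere $S_\alpha=\{\delta=\alpha\}$ one has $z_t\cdot v=\cos\alpha$, so $g(z_t\cdot v)=g(\cos\alpha)$ is constant there; parametrizing $v=\cos\alpha\,z_t+\sin\alpha\,\omega$ with $\omega\in\mathbb{S}^{n-1}\subset z_t^\perp$ and $d\mathcal H^{n-1}=\sin^{n-1}\!\alpha\,d\sigma(\omega)$, one finds $v\cdot p-t(z_t\cdot v)=\sin\alpha\,\bigl(\omega\cdot\pi_{z_t}(p)\bigr)$, while $v\in B_{p,r}$ reads $\omega\cdot\tau_t>\kappa(\alpha)$ with $\tau_t=\pi_{z_t}(p)/\sqrt{1-t^2}$ a unit vector and $\kappa(\alpha)=\dfrac{\cos r-t\cos\alpha}{\sqrt{1-t^2}\,\sin\alpha}$. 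Hence
\[
G_r'(t)=\frac1{\sqrt{1-t^2}}\int_0^\pi g(\cos\alpha)\,\sin^{n}\!\alpha\left(\,\int_{\{\omega\in\mathbb{S}^{n-1}:\,\omega\cdot\tau_t>\kappa(\alpha)\}}\omega\cdot\tau_t\;d\sigma(\omega)\right)d\alpha .
\]
The inner integral of the linear functional $\omega\mapsto\omega\cdot\tau_t$ over the cap $\{\omega\cdot\tau_t>\kappa(\alpha)\}$ equals $\tfrac{|\mathbb{S}^{n-2}|}{n-1}\bigl(1-\kappa(\alpha)^2\bigr)^{(n-1)/2}$ when $|\kappa(\alpha)|<1$ and $0$ otherwise; it is thus $\ge0$, and $>0$ exactly when $S_\alpha$ meets both $B_{p,r}$ and its complement --- that is, for $\alpha$ in the nonempty open interval $(|\theta-r|,\,\theta+r)\cap(0,\pi)$. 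On that interval $\cos\alpha\in(-1,1)$, so $g(\cos\alpha)>0$, and every remaining factor is $>0$; therefore $G_r'(t)>0$.

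\emph{The main obstacle.} It is precisely the positivity. Because $g$ is only assumed continuous and strictly positive on $(-1,1)$ --- with no monotonicity --- one cannot argue pointwise: for fixed $v\in B_{p,r}$ the factor $v\cdot p-t(z_t\cdot v)$ genuinely changes sign once $z_t$ enters $B_{p,r}$, and $g$ could weight the negative part arbitrarily. Slicing by geodesic spheres \emph{centred at $z_t$} is what neutralizes this: on each such sphere $g(z_t\cdot v)$ is a constant that factors out, and the sign question collapses to the elementary fact that the mean of a height function over a proper spherical cap is strictly positive --- a cap which here always opens toward $p$, precisely because $B_{p,r}$ is a metric ball, so its intersection with each $S_\alpha$ is a cap centred at the direction of $p$ seen from $z_t$.
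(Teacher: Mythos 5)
Your proof is correct, and for the only delicate point --- the strict positivity of $G_r'$ --- it takes a genuinely different route from the paper's. Both arguments rest on the same key observation, namely that the problematic weight $g(z\cdot v)$ is constant on geodesic spheres centred at $z$, but they exploit it differently. The paper reflects $B_{p,r}$ across the great hypersphere $u^\perp$ through $z$ orthogonal to the meridian joining $z$ to $p$ (with $u=-\pi_z(p)/|\pi_z(p)|$): since $R_u z=z$, the factor $g(z\cdot v)$ is $R_u$-invariant while $u\cdot v$ is anti-invariant, so the integral over the symmetric part $B_{p,r}\cap R_u(B_{p,r})$ cancels, and on the leftover $B_{p,r}\setminus R_u(B_{p,r})$ one has $u\cdot v<0$, giving the sign in two lines. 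You instead disintegrate over the geodesic spheres $S_\alpha$ themselves and compute the cap integral $\int_{\{\omega\cdot\tau_t>\kappa(\alpha)\}}\omega\cdot\tau_t\,d\sigma(\omega)=\tfrac{|\mathbb{S}^{n-2}|}{n-1}(1-\kappa(\alpha)^2)_+^{(n-1)/2}$ explicitly; your fibrewise positivity of this cap mean is exactly the sliced version of the paper's reflection (the cap minus its reflection in $\tau_t^\perp$ is where $\omega\cdot\tau_t$ has a definite sign). The paper's argument is shorter and coordinate-free; yours is longer but yields a closed-form expression for $G_r'(t)$ as a one-dimensional integral, which makes completely transparent where the derivative degenerates (only when no geodesic sphere about $z$ crosses $\partial B_{p,r}$, which never happens for $t\in(-1,1)$) and would give quantitative lower bounds if needed. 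Your treatment of the preliminary claims (rotational invariance for well-definedness, differentiation under the integral, chain rule for the gradient identity, oddness of $G_r$ to reduce to $t\ge 0$) matches the paper's, which leaves these steps implicit.
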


\begin{proof}
By explicit computation,
\[E_{1_{B_{p,r}}}(z)=\int_{B_{p,r}}G(z\cdot v)dv.\]
Since $G$ is $\mathcal{C}^1$, we obtain directly that $E_{1_{B_{p,r}}}$ is $\mathcal{C}^1$ and only depends on the scalar product $z\cdot p$.
Consequently, it may be written as a function $G_r(p\cdot z)$ with $G_r\in\mathcal{C}^1((-1,1),\R)$.

Let us now check that $G_r'>0$ on $(-1,1)$. For any $z\neq \pm p$,
\[G_r'(z\cdot p)\pi_z (p)=\nabla E_{1_{B_{p,r}}}(z)=\int_{B_{p,r}}g(z\cdot v)\pi_z(v)dv.\]
We let $u=-\frac{\pi_z(p)}{|\pi_z(p)|}$ be the unique unit vector in $T_z \Sn\cap  \langle p,z\rangle$ such that $u\cdot p<0$. Then
\begin{align*}
G_r'(z\cdot p)u\cdot p&=\int_{B_{p,r}}g(z\cdot v)(u\cdot v)dv\\
&=\int_{B_{p,r}\setminus R_u(B_{p,r})}g(z\cdot v)(u\cdot v)dv <0.
\end{align*}
The last inequality holds because $w\in B_{p,r}\setminus R_u(B_{p,r})$ implies $u\cdot w<0$.

 \end{proof}

\noindent {\bf Homotopy and zeroes of the gradient fields.}
Let $r\in (0,\pi/4)$ (its precise value is not important), and some $p,q\in\Sn$ that will be fixed later. Consider the following homotopy for $t \in [0,1]$:
\[H_t(a,z):=\left((1-t)\E_\rho(a,z)+t\E_{1_{B_{p,r}}}(a,z),(1-t)\E_\sigma(a,z)+t\E_{1_{B_{q,r}}}(a,z)\right).\]

We claim that $\nabla_z H_t(a,z)$ has no zeroes when $a\cdot z=0$. Indeed looking at the first component we always have

\begin{align*}
a\cdot\nabla_z \E_\rho(a,z)&=\int_{\Sn}\rho(v)g(z\cdot F_a v)(a\cdot\pi_zF_av) dv=\int_{\Sn}\rho(v)g(z\cdot v)(a\cdot F_av) dv<0.\\
a\cdot\nabla_z \E_{1_{B_{p,r}}}(a,z)&=\int_{B_{p,r}}g(z\cdot F_a v)(a\cdot\pi_zF_av) dv=\int_{B_{p,r}}g(z\cdot v)(a\cdot F_av) dv<0.
\end{align*}
so, by uniform continuity, the first component of $\nabla_z H_t(a,z)$ doesn't vanish for any $t\in [0,1]$ and $(a,z)\in\Sn\times \Sn$ for which $|a\cdot z|$ is small enough.\bigbreak

We claim that for well-chosen points $p,q$,
$$z\in a^-,\ \nabla_z H_1(a,z)=0 \mbox{  implies } \{z,R_a (z)\}=\{p,q\}.$$
The points $p,q$ we will be chosen to be antipodal, but it may be checked that any $p,q$ such that $B_{p,r}\cap B_{q,r}=\emptyset$ would work as well.

Indeed,
\begin{itemize}[label=\textbullet]
\item If $\nabla_z\E_{1_{B_{p,r}}}(a,z)=0$, then $p\in \langle a,z\rangle$. This is because for any $b\in \langle z,a\rangle^{\bot}$, we have
\[b\cdot \nabla_z\E_{1_{B_{p,r}}}(a,z)=\int_{B_{p,r}}g(z\cdot F_a (v))(b\cdot v)dv=\int_{B_{p,r}\setminus B_{R_b p,r}}g(z\cdot F_a (v))(b\cdot v)dv,\]
which is positive (resp. negative) as soon as $b\cdot p>0$ (resp. $b\cdot p<0$).
\item If $R_a(z)=-z$ or $B_{p,r}\subset a^-\sqcup a^+$, then $p\in \{z,R_a(z)\}$. Indeed in the first case it means $a=-z$, so $z$ can be identified with the critical points of $E_{1_{B_{p,r}}}$, which are $\{\pm z\}$. In the second case, if $\overline{B_{p,r}}$ is fully in $a^-$ (resp $a^+$), then $z$ (resp $R_a z$) is also a critical point of $E_{B_{p,r}}$, meaning
\[p=\begin{cases}z & \text{ if }p\in a^-,\\ R_a(z) & \text{ if }p\in a^+\end{cases}.\]
\item If $\nabla_z\E_{1_{B_{p,r}}}(a,z)=0$ and $p\notin \{z,R_a (z)\}$, then
\begin{equation}\label{eq_milieu}
c:=\frac{z+R_a(z)}{|z+R_a(z)|}\in \overline{B_{p,r}}.
\end{equation} Indeed, according to the previous point $R_a(z)\neq -z$ (meaning $c$ is well-defined) and $B_{p,r}$ meets $a^\bot$. Since $\overline{B}(p,r)$ and $a^\bot$ intersect, $r<\frac{\pi}{4}$ and $p\in \langle a,z\rangle$, then necessarily $\overline{B_{p,r}}$ contains (exactly) one of $\{-c,c\}$. However, if it meets $-c$ then since $r<\frac{\pi}{4}$, $B_{p,r}\subset c^-\subset\{v:a\cdot \pi_z F_a (v)<0\}$ so
\[a\cdot\nabla_z \E_{1_{B_{p,r}}}(a,z)=\int_{B_{p,r}}g(z\cdot F_a (v))(a\cdot \pi_z F_a (v))dv<0\]
so necessarily $c\in\overline{B_{p,r}}$.
\end{itemize}
Since we are free to choose $p$ and $q$, we may take $p=e_{n+1}$, $q=-e_{n+1}$. Assume $\nabla_z H_1(a,z)=0$ and suppose that $\{p,q\}\neq \{z,R_az\}$. Then using the second point, necessarily $z\neq R_a z$ and either $B_{p,r}$ or $B_{q,r}$ meets $c$ (which is as defined in equation \eqref{eq_milieu}) ; without loss of generality assume $c\in \overline{B_{p,r}}$, then since $r<\frac{\pi}{4}$ we have $B_{q,r}\subset c^-\cap (a^+\sqcup a^-)$, which is in contradiction with the fact that $q\in \{z,R_a z\}$. This prove the claim.\bigbreak

\noindent {\bf Counting zeroes modulo $4$.} Let us now make a change of parametrization; for any $(z,w)\in \Sn\times \Sn$ that are not identical, we define
\[a= a(z,w):=\frac{w-z}{|w-z|},\]
such that $z\in a^-$, $w=R_a z$, $R_a$ acting as an isometry between $T_z \Sn$ and $T_w \Sn$. We let $D$ be a (small) tubular neighbourhood of $\{(z,w)\in(\Sn)^2 :z=w\}$, $M=(\Sn)^2\setminus D$, and we define, for any densities $(\rho,\sigma)$, the vector field
\[\V_{\rho,\sigma}(z,w)=\left(\nabla_z\E_\rho(a(z,w),z),R_a\nabla_z\E_\sigma(a(z,w),z)\right).\]
$\V_{\rho,\sigma}$ and $\V_{1_{B_{p,r}},1_{B_{q,r}}}$ are two tangent vector fields of $M$ that are homotopic with no zeroes crossing $\partial M$ when $D$ is chosen small enough.\newline

Define $S:(z,w)\in M\mapsto (w,z)\in M$; it is a smooth involution with no fixed point. For any $(z,w)\in M$, let $Q(z,w)$ be the linear endomorphism of $T_{z,w}M=T_z\Sn\times T_w\Sn$ defined by
\[Q(z,w).(h,k)=(R_a k,R_a h).\]
We claim that for any $(z,w)\in M$:
\begin{equation}\label{eq_symmetry}
dS(w,z)\V_{\rho,\sigma}(w,z)=Q(z,w)\V_{\rho,\sigma}(z,w),
\end{equation}
or in a more synthetic way that $S_*\V_{\rho,\sigma}=Q\V_{\rho,\sigma}$ as in the hypothesis of Lemma \ref{TopLemma}. Indeed, write $a:=a(z,w)$, notice that $a(w,z)=-a$, that $\pi_z R_a =R_a \pi_w$ and $F_{-a}=R_aF_a$. Then
\begin{align*}
\V_{\rho,\sigma}(w,z)&=\left(\int_{\Sn}\rho(v)g(w\cdot F_{-a}v)\pi_wF_{-a}v dv,R_a\int_{\Sn}\sigma(v)g(w\cdot F_{-a}v)\pi_wF_{-a}v dv\right)\\
&=\left(R_a\int_{\Sn}\rho(v)g(z\cdot F_{a}v)\pi_zF_{a}v dv,\int_{\Sn}\sigma(v)g(z\cdot F_{a}v)\pi_z F_{a}v dv\right),\\
\end{align*}
and $dS(w,z).(h,k)=(k,h)$, so
\begin{align*}
dS(w,z).\V_{\rho,\sigma}(w,z)&=\left(\int_{\Sn}\sigma(v)g(z\cdot F_{a}v)\pi_z F_{a}v dv,R_a\int_{\Sn}\rho(v)g(z\cdot F_{a}v)\pi_zF_{a}v dv\right)\\
&=Q(z,w).\V_{\rho,\sigma}(z,w).
\end{align*}
We thus define the zero counting modulo $4$ in $M$ as in Lemma \ref{TopLemma} and we claim that
\[I(\V_{1_{B_{p,r}},1_{B_{q,r}}})=2\text{ mod }(4).\]
According to the previous discussion, the zeroes of $\V_{1_{B_{p,r}},1_{B_{q,r}}}$ are exactly $\{(p,q),(q,p)\}$. Let $(z,w)$ be sufficiently close to $(p,q)$ such that $\overline{B_{p,r}}\subset a(z,w)^-$, $\overline{B_{q,r}}\subset a(z,w)^+$. Notice that in this case
\begin{align*}
\nabla_z \E_{1_{B_{p,r}}}(a,z)&=\int_{B_{p,r}}g(z\cdot v)\pi_z vdv=G_r'(z\cdot p)\pi_z (p)\\
\nabla_z \E_{1_{B_{q,r}}}(a,z)&=\int_{B_{q,r}}g(z\cdot R_a(v))\pi_z (R_a(v))dv\\
&=R_a\Big (\int_{B_{q,r}}g(w\cdot v)\pi_w (v)dv\Big)\\
&=G_r'(w\cdot q)R_a(\pi_w (q))
\end{align*}
because $\pi_z R_a=R_a \pi_w$. Thus $\V_{1_{B_{p,r}},1_{B_{q,r}}}(z,w)$ admits the simpler expression (when $(z,w)$ is in a neighbourhood of $(p,q)$)
\begin{equation}\label{eq_J}
\V_{1_{B_{p,r}},1_{B_{q,r}}}(z,w)=\left(G_r'(z\cdot p)\pi_z (p),G_r'(w\cdot q)\pi_w(q)\right).
\end{equation}
Up to a choice of orientable chart, this is locally homotopic to $-\mathrm{Id}_{|\mathcal{B}}$ (where $\mathcal{B}$ is the unit Euclidian ball of $\R^{2n}$) which has a unique nondegenerate zero. When $(z,w)$ is near $(q,p)$ the study is the same by the symmetry property \eqref{eq_symmetry}, so $(q,p)$ counts as one nondegenerate zero. We thus get that $I(\V_{1_{B_{p,r}},1_{B_{q,r}}})=2\text{ mod }(4)$ as claimed. Since $\V_{\rho,\sigma}$ and $\V_{1_{B_{p,r}},1_{B_{q,r}}}$ are homotopic in $\X^*(M)$, by invariance of $I$ through homotopy:
\[I\left(\V_{\rho,\sigma}\right)=I\left(\V_{1_{B_{p,r}},1_{B_{q,r}}}\right)=2\text{ mod }(4).\]

\noindent{\bf Conclusion.} Since  $I\left(\V_{\rho,\sigma}\right)\neq 0\text{ mod }(4)$ we conclude that $\V_{\rho,\sigma}$ has a zero $(z,w)$ somewhere in $M$. Since $z\neq w$, then  $(a,z):=\left(\frac{w-z}{|w-z|},z\right)$ satisfies the conclusion of Theorem \ref{bmn08}.\end{proof}

\section{Proof of Theorem \ref{bmn06}}\label{bmnsec3}
Although Theorem \ref{bmn06} is {\it de facto} a consequence of Theorem \ref{bmn05}, for expository reasons, we start with a short, independent  proof of Theorem \ref{bmn06}. The reason is that Theorem \ref{bmn05} requires some extra arguments related to the presence of  higher order eigenvalues and so it can hide the core ideas of the proof.

\begin{proof}(of Theorem \ref{bmn06}) 
Let $r$ be the radius of $B^{|\Om|/2}$. The first non-trivial eigenvalue of $B_r$ (supposed to be centered in $e_{n+1}$) has multiplicity $n$ and its eigenfunctions are of the form
\[v\mapsto J(\theta)\frac{v_i}{\sin(\theta)},\ i=1,\dots, n\]
where $\theta=\arccos(v_{n+1})$ is the angle between $v$ and $e_{n+1}$. The function $J:[0,r]\to \R_+$
is a non-trivial solution of the differential equation
\[\frac{1}{\sin(\theta)^{n-1}}\frac{d}{d\theta}\left[\sin(\theta)^{n-1}\frac{d}{d\theta}J(\theta)\right]+\left(\mu_1(B_r)-\frac{n-1}{\sin(\theta)^2}\right)J(\theta)=0.\]

In \cite{AB95} (see also \cite{BBC20}), the authors study this function and prove that $J(0)=0$, $J'(r)=0$, $J'>0$ on $(0,r)$, meaning $J$ is positive and increasing on $(0,r]$.
 Following \cite{AB95}, we extend $J$ on $[0,\pi]$: we let it be constant (equal to $J(r)$ by continuity) on $[r,\pi/2]$, and symmetric along the reflexion $\theta\mapsto  \pi -\theta$. Moreover, from \cite{AB95} (see also \cite{BBC20}), the functions
\[\theta\in \left(0,\frac{\pi}{2}\right)\mapsto J(\theta)^2,\ \theta\in\left(0,\frac{\pi}{2}\right)\mapsto  J'(\theta)^2+\frac{n-1}{\sin(\theta)^2}J(\theta)^2 \]
are respectively nondecreasing and decreasing.\bigbreak

We then define for all $ t$ in $  (-1,1)$
\[g(t):=\frac{J(\arccos(t))}{\sqrt{1-t^2}}\]
and extend it by continuity in $1$ and $-1$ with value $0$.
We set  $G(t)=\int_{0}^t g$ as previously.
 \bigbreak

Consider $\rho=1_\Om$, $\sigma=u_1 1_\Om$ where $u_1$ is a non-constant eigenfunction associated to $\mu_1(\Om)$. When $\Om$ is disconnected, such a function is  a constant different from $0$ on some of the connected components and equals to $0$ elsewhere.

In order to prove the inequality $\mu_2(\Om) \le \mu_2\left(B^{m/2} \sqcup B^{m/2}\right)$ the mass transplantation method used in \cite{BH19} works, in relation with the properties of $J$ recalled above. Indeed, following Theorem \ref{bmn08} there exists $(a,z) \in \Sn \times \Sn$ such that
$$\int_{\Om}g(z\cdot F_a (v))\pi_zF_a (v)dv= \int_{\Om}u_1(v)g(z\cdot F_a (v))\pi_zF_a (v)dv=0.$$

Up to a rotation we may 	assume $z=e_{n+1}$ with $a\in e_{n+1}^-$. This means that for any $i=1,\hdots,n$, the function
\[v\mapsto g(z\cdot F_a v)(F_a v)_i\]
is orthogonal to $1$ and $u_1$ in $L^2(\Om)$. In particular, for each $i=1,\hdots,n$ we have
\begin{equation}\label{eq_testfunc}
\mu_1(\Om)\int_{\Om}\left|g(z\cdot F_a v)(F_a v)_i\right|^2dv\leq  \int_{\Om}\left|\nabla(g(z\cdot F_a v)(F_a v)_i)\right|^2dv
\end{equation}
We now define $H_{a^-}=\left(B(e_{n+1},r)\cup B(-e_{n+1},r)\right)\cap a^-$, and we define the first angular coordinate $\theta^-:v\mapsto \arccos(v_{n+1})$. Similary, we let $H_{a^+}=R_a(H_{a^-})(\subset a^+)$ and $ {\theta^+}(v)=\theta^-(R_av)$. Then summing the previous inequality in $i$ we get \begin{align*}
\mu_2(\Om)&\left[\int_{H_{a^-}\cap\Om}J(\theta^-)^2dv+\int_{H_{a^+}\cap\Om}J( {\theta^+})^2dv+|\Om\setminus (H_{a^-}\cup H_{a^+})|J(r)^2\right]\\
\leq &\left[\int_{H_{a^-}\cap\Om}b(\theta^-)dv+\int_{H_{a^+}\cap\Om}b( {\theta^+})dv+\int_{\Om\setminus (H_{a^-}\cup H_{a^+})}b\left(\theta^-(F_a v)\right)dv\right]
\end{align*}
Above, we have denoted $b(\theta):=J'(\theta)^2+\frac{n-1}{\sin(\theta)^2}J(\theta)^2$.

Using the properties of $J(\theta)$, $b(\theta)$ and the fact that $|\Om\setminus (H_{a^-}\cup H_{a^+})|=|H_{a^-}\setminus\Om|+|H_{a^+}\setminus \Om|$ this reduces to
\begin{align*}
\mu_2(\Om)&\leq\frac{\int_{H_{a^-}\cap\Om}b(\theta^-)dv+\int_{H_{a^+}\cap\Om}b({\theta^+})dv+|\Om\setminus (H_{a^-}\cup H_{a^+})|b(r)}{\int_{H_{a^-}\cap\Om}J(\theta^-)^2dv+\int_{H_{a^+}\cap\Om}J({\theta^+})^2dv+|\Om\setminus (H_{a^-}\cup H_{a^+})|J(r)^2}\\
&\leq\frac{\int_{H_{a^-}}b(\theta^-)dv+\int_{H_{a^+}}b({\theta^+})dv}{\int_{H_{a^-}}J(\theta^-)^2dv+\int_{H_{a^+}}J( {\theta^+})^2dv}\\
&=\mu_1\left(B_r\right),
\end{align*}
which is the result.

\end{proof}

\cthe\begin{corollary}
Let $\Om\subset\Sn$ be a Lipschitz domain and $i(\Om) \sq \Sn$ an isometric image of $\Om$.  Assume that $\Om\cap i(\Om)= \emptyset$. Then
\[\mu_1(\Om)\leq \mu_1(B^{|\Om|}).\]
\end{corollary}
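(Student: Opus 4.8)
The plan is to deduce this corollary directly from Theorem~\ref{bmn06} applied to the disjoint union $\Om \sqcup i(\Om)$. Since $\Om$ and $i(\Om)$ are isometric, the Laplace--Beltrami spectra of $\Om$ and $i(\Om)$ coincide; in particular $\mu_1(\Om) = \mu_1(i(\Om))$. Because the two pieces are disjoint, the spectrum of the union is the ordered merge of the two spectra. Writing out the smallest eigenvalues: $\mu_0(\Om \sqcup i(\Om)) = \mu_1(\Om \sqcup i(\Om)) = 0$ (one zero mode per component), and the next eigenvalue is $\mu_2(\Om \sqcup i(\Om)) = \min\{\mu_1(\Om), \mu_1(i(\Om))\} = \mu_1(\Om)$.

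Next I would apply Theorem~\ref{bmn06} to the open Lipschitz set $\Om \sqcup i(\Om)$, whose measure is $|\Om \sqcup i(\Om)| = 2|\Om|$. This gives
\[
\mu_1(\Om) = \mu_2\bigl(\Om \sqcup i(\Om)\bigr) \le \mu_1\bigl(B^{|\Om \sqcup i(\Om)|/2}\bigr) = \mu_1\bigl(B^{|\Om|}\bigr),
\]
which is exactly the claimed inequality.

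There is essentially no obstacle here: the only points to check carefully are that $i(\Om) \sq \Sn$ is again an open Lipschitz set (immediate, since isometries of $\Sn$ are smooth diffeomorphisms preserving measure), that the disjointness hypothesis $\Om \cap i(\Om) = \emptyset$ is what makes the spectrum of the union decompose as the union of the two spectra, and that an isometry preserves the Dirichlet energy and the $L^2$ norm so that $\mu_k(i(\Om)) = \mu_k(\Om)$ for all $k$. Everything else is a direct substitution into Theorem~\ref{bmn06}. I would also remark that this recovers Corollary~\ref{bmn08.1} as the special case where $i$ is chosen so that $i(\Om)$ is a geodesic ball's complement situation --- more precisely, Corollary~\ref{bmn08.1} already follows from this by taking any isometry moving $\Om$ off itself, which is possible whenever $\Om$ avoids a geodesic ball of its own measure.
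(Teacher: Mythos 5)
Your proof is correct and is exactly the paper's argument: the paper states the corollary as an immediate consequence of Theorem~\ref{bmn06} applied to $\Om\sqcup i(\Om)$, and you have simply filled in the routine details (isometry invariance of the spectrum, the merge of spectra for a disjoint union, and $|\Om\sqcup i(\Om)|/2=|\Om|$). One caveat on your closing aside only: Corollary~\ref{bmn08.1} does not follow by choosing an isometry with $i(\Om)\cap\Om=\emptyset$ (avoiding a geodesic ball of measure $|\Om|$ does not obviously provide such an isometry); the paper instead proves it by pairing $\Om$ with the disjoint ball $B^{|\Om|-\eps}$ and letting $\eps\to 0$, using the monotonicity of $m\mapsto\mu_1(B^m)$.
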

The proof is an immediate consequence of Theorem \ref{bmn06} applied to to $\Om\sqcup i(\Om)$.
This result was previously known only for $i(\Om)=-\Om$.
\cthe\begin{corollary}\label{bmn15}
Let $\Om\subset\Sn$ such that $|\Om|\le \frac{|\Sn|}{2} $ and $\Om\cap B^{|\Om|}=\emptyset$. Then
\[\mu_1(\Om)\leq \mu_1(B^{|\Om|}).\]
\end{corollary}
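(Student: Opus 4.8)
The plan is to deduce Corollary~\ref{bmn15} from Theorem~\ref{bmn06} by pairing $\Om$ with a geodesic ball disjoint from it --- but \emph{not} with $B^m$ itself, since pairing $\Om$ with $B^m$ (as in the sketch given in the introduction) only yields $\min(\mu_1(\Om),\mu_1(B^m))\le \mu_1(B^m)$, which is empty. The fix is to shrink the auxiliary ball.

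First I would dispose of trivial cases: if $\Om$ is disconnected then $\mu_1(\Om)=0$ and there is nothing to prove, so assume $\Om$ connected, and write $m=|\Om|$. The hypothesis $\Om\cap B^m=\emptyset$ already forces $m\le |\Sn|/2$ (for $m>|\Sn|/2$ the set $\Sn\sm B^m$ is too small to contain $\Om$), and when $m=|\Sn|/2$ the set $\Om$ equals an open hemisphere up to a null set and the conclusion is an equality; so assume $m<|\Sn|/2$. I will also use the classical facts from \cite{AB95} that $a\mapsto\mu_1(B^a)$ is continuous and strictly decreasing on $(0,|\Sn|/2]$ with $\mu_1(B^a)\to+\infty$ as $a\to 0^+$, together with the elementary spectral identity $\mu_2(\Om_1\sqcup\Om_2)=\min(\mu_1(\Om_1),\mu_1(\Om_2))$ valid for connected $\Om_1,\Om_2$.

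The core of the argument is by contradiction. Assume $\mu_1(\Om)>\mu_1(B^m)$. By the intermediate value theorem there is $m^*\in(0,m)$ with $\mu_1(B^{m^*})=\mu_1(\Om)$; take $B^{m^*}$ to be the ball of volume $m^*$ concentric with $B^m$, so $\overline{B^{m^*}}\subset B^m$, hence $B^{m^*}\cap\Om=\emptyset$ and $D:=\Om\sqcup B^{m^*}$ is an open Lipschitz set of measure $m+m^*$. Since $\Om$ and $B^{m^*}$ are connected, the bottom of the spectrum of $D$ is $0,0,\min(\mu_1(\Om),\mu_1(B^{m^*})),\dots$, so $\mu_2(D)=\min(\mu_1(\Om),\mu_1(B^{m^*}))=\mu_1(\Om)$ by the choice of $m^*$. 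Theorem~\ref{bmn06} applied to $D$ then gives
\[\mu_1(\Om)=\mu_2(D)\le\mu_1\big(B^{(m+m^*)/2}\big).\]
But $m^*<(m+m^*)/2<m\le|\Sn|/2$, so strict monotonicity of $\mu_1$ on sub-hemispherical balls yields $\mu_1\big(B^{(m+m^*)/2}\big)<\mu_1(B^{m^*})=\mu_1(\Om)$, a contradiction. Hence $\mu_1(\Om)\le\mu_1(B^m)$.

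There is no serious obstacle here beyond noticing that one must shrink the test ball rather than use $B^m$ directly; the only external inputs are the (classical) continuity and strict monotonicity of $r\mapsto\mu_1(B_r)$ for $r<\pi/2$. An alternative route keeps $B^m$ fixed and instead invokes the equality case of Theorem~\ref{bmn05}: if $\mu_1(\Om)\ge\mu_1(B^m)$ then equality holds in $\mu_2(\Om\sqcup B^m)\le\mu_1(B^{|\Om|})$, which by the rigidity part of Theorem~\ref{bmn05} forces $\Om\sqcup B^m$ to be two equal disjoint geodesic balls, whence $\Om$ is a ball of volume $m$ and $\mu_1(\Om)=\mu_1(B^m)$. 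This works as well, but is heavier since it relies on the rigidity statement rather than only on monotonicity of $\mu_1$ on balls.
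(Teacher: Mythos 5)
Your proof is correct and follows essentially the same route as the paper: pair $\Om$ with a strictly smaller concentric geodesic ball disjoint from it, apply Theorem \ref{bmn06} to the union, and conclude via the strict monotonicity of $a\mapsto\mu_1(B^a)$ on sub-hemispherical balls. The paper runs this directly with $B^{|\Om|-\eps}$ and lets $\eps\to 0$, rather than choosing $m^*$ by the intermediate value theorem and arguing by contradiction, but that difference is cosmetic.
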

This result extends the hemisphere inclusion condition of Ashbaugh and Benguria expressed, with our notation, as  $\Om\cap B^{|\Sn|/2} =\emptyset$.
\begin{proof}

We remind that the function
\[m\in \left(0,|\Sn|\right)\mapsto \mu_1(B^m)\]
is continuous and decreasing on $(0,|\Sn|/2)$ (and actually on a slightly larger interval), as was proved  in \cite{AB95}.

Let $\Om \sq \Sn$ satisfying the hypotheses of Corollary \ref{bmn15}. Consider $\eps\in (0,|\Om|)$, then $\Om\cap B^{|\Om|-\eps}=\emptyset$ and, according to Theorem \ref{bmn06},
\[\min\left\{\mu_1(\Om), \mu_1\left(B^{|\Om|-\eps}\right)\right\}=\mu_2\left(\Om\sqcup B^{|\Om|-\eps}\right)\leq \mu_1\left(B^{|\Om|-\frac{1}{2}\eps}\right)\]
Since $\mu_1\left(B^{|\Om|-\eps}\right)> \mu_1\left(B^{|\Om|-\frac{1}{2}\eps}\right)$, we get $\mu_1(\Om)\leq \mu_1\left(B^{|\Om|-\frac{1}{2}\eps}\right)$, which implies the result as $\eps\to 0$.
\end{proof}

\section{Proof of Theorem \ref{bmn05}}\label{bmnsec4}
\begin{proof}
 In \cite{BBC20}, it has been proved that
\[\sum_{i=1}^{n-1}\frac{1}{\mu_i\left(\Om\right)}\geq
\sum_{i=1}^{n-1}\frac{1}{\mu_i\left(B^{|\Om|}\right)}\left(=\frac{n-1}{\mu_1\left(B^{|\Om|}\right)}\right).\]
We mostly rely on those computations, to which we refer the reader.
With the notations of the proof of Theorem \ref{bmn06}, the main idea is to choose  a suitable basis for $e_{n+1}^\perp$ so that
extra orthogonality conditions on the eigenfunctions occur. This is also the idea behind \cite{WaXi18} and \cite{BBC20}, however, some extra care is needed because of the structure of  the test functions found in Section \ref{bmnsec2}.

We fix again $\rho=1_\Om$, $\sigma=u_1 1_{\Om}$ and
suppose, up to a rotation, that  $a\in e_{n+1}^-$ is such that $e_{n+1}$ is a critical point of both $\E_\rho(a,\cdot)$ and $\E_{\sigma}(a,\cdot)$. This means that for any $\xi\in\Rn(\hookrightarrow \Rn\times\{0\}=e_{n+1}^\perp)$, the function
\[\varphi_\xi(w):=g((F_aw)_{n+1})(\xi\cdot F_aw)\]
is orthogonal to $1,u_1$ in $L^2(\Om)$.
\clem\begin{lemma}
There is a choice of an orthonormal basis $\xi_1,\hdots,\xi_n$ of $\R^n$ such that for any $i=1,\hdots,n$,
\[\varphi_{\xi_i}\in \langle 1,u_1,\hdots,u_{i}\rangle^\bot\]
\end{lemma}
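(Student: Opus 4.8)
The plan is to construct the basis $\xi_1,\dots,\xi_n$ inductively, peeling off one orthogonality condition at a time by a dimension count. The key observation is that for a \emph{fixed} direction of integration, the map $\xi\mapsto\varphi_\xi$ is linear in $\xi\in\R^n$: indeed $\varphi_\xi(w)=g((F_aw)_{n+1})(\xi\cdot F_aw)$ depends linearly on $\xi$. Consequently, for any $u\in L^2(\Om)$ the map
\[\xi\in\R^n\longmapsto \int_\Om u(w)\,g((F_aw)_{n+1})\,(\xi\cdot F_aw)\,dw\]
is a \emph{linear functional} on $\R^n$, hence its kernel is a linear subspace of $\R^n$ of dimension at least $n-1$. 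This is the mechanism that lets us impose one extra orthogonality condition per step while only dropping the dimension of the admissible space of directions by one.

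Here is the induction. Let $V_0=\R^n$; by Theorem~\ref{bmn08} (in the normalized form just stated), $\varphi_\xi\perp 1$ and $\varphi_\xi\perp u_1$ in $L^2(\Om)$ for \emph{every} $\xi\in V_0$. Suppose that for some $1\le i\le n-1$ we have found an orthonormal family $\xi_1,\dots,\xi_{i-1}$ and a linear subspace $V_{i-1}\subseteq\R^n$ of dimension $n-i+1$ such that $\xi_1,\dots,\xi_{i-1}\in V_{i-1}^{\perp}$ (inside $\R^n$) — wait, more precisely such that $\mathrm{span}(\xi_1,\dots,\xi_{i-1})$ is orthogonal to $V_{i-1}$ and such that $\varphi_\xi\perp\langle 1,u_1,\dots,u_{i-1}\rangle$ for all $\xi\in V_{i-1}$. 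Consider the linear functional $\ell_i:\xi\mapsto\int_\Om u_i\,\varphi_\xi$. Its restriction to $V_{i-1}$ has a kernel $V_i:=V_{i-1}\cap\ker\ell_i$ of dimension at least $\dim V_{i-1}-1=n-i$. Pick any unit vector $\xi_i\in V_i$ that is orthogonal to $\xi_1,\dots,\xi_{i-1}$ — this is possible because $V_i$ has dimension $\ge n-i\ge 1$ and $\mathrm{span}(\xi_1,\dots,\xi_{i-1})$ is orthogonal to $V_{i-1}\supseteq V_i$, so $V_i$ is already orthogonal to that span and we only need $\dim V_i\ge 1$. Then $\varphi_{\xi_i}\perp\langle 1,u_1,\dots,u_i\rangle$ as required. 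After step $n-1$ we have $\xi_1,\dots,\xi_{n-1}$ orthonormal and $V_{n-1}$ of dimension at least $1$; since $V_{n-1}\perp\mathrm{span}(\xi_1,\dots,\xi_{n-1})$ and we are in $\R^n$, $V_{n-1}$ is exactly the line orthogonal to that span, and we take $\xi_n$ to be a unit vector on it. Because $\xi_n\in V_{n-1}\subseteq V_{n-2}\subseteq\cdots\subseteq V_0$, and at each stage $V_j\subseteq\ker\ell_j$, we get $\varphi_{\xi_n}\perp\langle 1,u_1,\dots,u_{n-1}\rangle$; moreover $\varphi_{\xi_n}\perp u_n$ can be added for free only if we want it, but the statement only requires orthogonality to $\langle 1,u_1,\dots,u_i\rangle$ with $i=n$, i.e.\ up to $u_n$, so in fact one would run the induction one more step using $\ell_n$ on $V_{n-1}$ — but $\dim V_{n-1}=1$ may force $\ell_n$ to vanish on it automatically, or one simply notes the claim as stated asks only for $\varphi_{\xi_i}\in\langle 1,u_1,\dots,u_i\rangle^\perp$ for $i=1,\dots,n$, which the construction delivers verbatim.

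The only genuinely delicate point is the compatibility between the \emph{orthonormality} requirement on the $\xi_i$'s in $\R^n$ and the \emph{nested linear constraints} $\xi_i\in V_i$. This is handled cleanly by maintaining the invariant that $V_i$ is orthogonal (in $\R^n$) to $\mathrm{span}(\xi_1,\dots,\xi_i)$: at each step, since $V_i=V_{i-1}\cap\ker\ell_i$ and $V_{i-1}\perp\xi_j$ for $j<i$ by the previous invariant, we only need $\xi_i\in V_i$ to be chosen orthogonal to $\xi_1,\dots,\xi_{i-1}$, which it is automatically because $\xi_i\in V_i\subseteq V_{i-1}\perp\mathrm{span}(\xi_1,\dots,\xi_{i-1})$; and then $V_{i+1}\subseteq V_i$ together with $\xi_i\in V_i$ does \emph{not} immediately give $V_{i+1}\perp\xi_i$, so one should instead redefine $V_i:=(V_{i-1}\cap\ker\ell_i)\cap\xi_i^{\perp}$, which still has dimension $\ge n-i-1+1=n-i$ hmm — let me recompute: $\dim(V_{i-1}\cap\ker\ell_i)\ge n-i+1-1=n-i$ minus possibly one more for $\xi_i^\perp$, giving $\ge n-i-1$... this bookkeeping is exactly where care is needed, and the correct streamlined statement is: choose $V_i\subseteq V_{i-1}$ of dimension exactly $n-i$ with $V_i\subseteq\ker\ell_i$ and $V_i\perp\xi_i$ where $\xi_i$ is any unit vector of $V_{i-1}\cap\ker\ell_i$ orthogonal to $\xi_1,\dots,\xi_{i-1}$ (which exists since $\dim(V_{i-1}\cap\ker\ell_i)\ge n-i\ge 1$ and it sits inside $V_{i-1}\perp\mathrm{span}(\xi_1,\dots,\xi_{i-1})$). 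Starting from $\dim V_0=n$, this keeps $\dim V_i=n-i$ throughout, so the induction runs until $i=n$, exhausting all $n$ vectors. I expect the write-up of this dimension bookkeeping — keeping the nested subspaces, the kernels of the functionals, and the orthonormality of the $\xi_i$ all consistent — to be the main (and only) obstacle; the linearity of $\xi\mapsto\varphi_\xi$ does all the real work.
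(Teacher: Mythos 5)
Your key observation --- that $\xi\mapsto\varphi_\xi$ is linear, so that each $\ell_j(\xi)=\int_\Om u_j\varphi_\xi$ is a linear form on $\R^n$ whose kernel is a hyperplane or all of $\R^n$ --- is correct, and it would indeed let one replace the paper's appeal to Borsuk--Ulam by plain rank--nullity. But your induction runs in the wrong direction, and the dimension count that you yourself flag as ``the main obstacle'' does not close; this is not a bookkeeping issue but a genuine failure of the increasing-order construction. Write $K_i:=\bigcap_{j=2}^{i}\ker\ell_j$, so that $\varphi_\xi\perp\langle 1,u_1,\dots,u_i\rangle$ exactly when $\xi\in K_i$ (orthogonality to $1$ and $u_1$ being free by Theorem~\ref{bmn08}), with $\dim K_i\ge n-i+1$. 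At stage $i$ you must pick $\xi_i$ in $K_i\cap\mathrm{span}(\xi_1,\dots,\xi_{i-1})^\perp$ and hand the next stage a subspace of $K_i\cap\mathrm{span}(\xi_1,\dots,\xi_i)^\perp$ of dimension $n-i$. Since the earlier vectors $\xi_1,\dots,\xi_{i-1}$ were chosen in the \emph{larger} spaces $K_1\supseteq\cdots\supseteq K_{i-1}$ and need not lie in $K_i$, all that is guaranteed is $\dim\bigl(K_i\cap\mathrm{span}(\xi_1,\dots,\xi_i)^\perp\bigr)\ge (n-i+1)+(n-i)-n=n-2i+1$: each step consumes two dimensions (one for $\ker\ell_i$, one for $\xi_i^\perp$) while your invariant budgets only one. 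Your ``streamlined statement'' asserting $\dim V_i=n-i$ throughout is false as soon as $\ell_2\ne 0$ and $\xi_1\notin\ker\ell_2$, and already for $n=3$ the space $K_3\cap\{\xi_1,\xi_2\}^\perp$ in which $\xi_3$ must live is generically $\{0\}$.

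The paper's proof avoids this by building the basis in \emph{decreasing} order: $\xi_n$ is chosen first in $K_n$ (which has dimension $\ge 1$), then $\xi_{n-1}\in K_{n-1}\cap\xi_n^\perp$, and so on. Because $K_n\subseteq K_{n-1}\subseteq\cdots\subseteq K_i$, the previously chosen vectors $\xi_n,\dots,\xi_{i+1}$ all lie \emph{inside} $K_i$, so the orthogonality constraints are internal to $K_i$ and cut its dimension by exactly $n-i$, leaving dimension $\ge (n-i+1)-(n-i)=1$ at every stage. With that reversal, your linear-algebra argument goes through verbatim --- and is arguably cleaner than the Borsuk--Ulam step in the paper, which is only genuinely needed when the dependence on $\xi$ is merely odd rather than linear.
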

\begin{proof}
We start by choosing $\xi_n$. Consider the function
\[f:\xi\in\mathbb{S}^{n-1}\mapsto \left(\int_{\Om}u_2\varphi_\xi,\hdots,\int_{\Om}u_{n}\varphi_\xi\right)\in \R^{n-1}.\]
Then $f$ verifies $f(-\xi)=-f(\xi)$, so by Borsuk-Ulam theorem \cite{Ma03} we get that $f$ must vanish at some $\xi_n$. We then continue with the restriction
\[\xi\in\mathbb{S}^{n-1}\cap\xi_n^\bot\mapsto \left(\int_{\Om}u_2\varphi_\xi,\hdots,\int_{\Om}u_{n-1}\varphi_\xi\right)\in \R^{n-2}.\]
to choose $\xi_{n-1}$, and so on until $\xi_2$ such that $\varphi_{\xi_2}$ is orthogonal to $1,u_1$ and $u_2$. Finally, $\xi_1$ is then chosen in $\Sn\cap \langle \xi_n,\hdots,\xi_2\rangle^\bot$ (there are two possibilities and we may choose the one that makes an oriented basis, although this is not important for the rest of the proof).
\end{proof}

From the orthogonality properties proved above, for any $i=1, \dots, n$ we get
\[\int_{\Om}| \varphi_{\xi_i}|^2\leq \frac{1}{\mu_{i+1}(\Om)}\int_{\Om}|\nabla \varphi_{\xi_i}|^2.\]
Assuming that $\xi_i =e_i$,

$$\int_{{a^-}} (1_\Om+ 1_{R_a({a^+}\cap\Om)})J^2(\arccos (v_{n+1}))\frac{ v_i^2 }{1-v_{n+1}^2 }dv\le    $$
$$\frac{1}{\mu_{i+1}(\Om)}\int_{{a^-} }  (1_\Om+ 1_{R_a({a^+}\cap\Om)})  \left(\frac{J^2(\arccos (v_{n+1}))(1-v_i^2-v_{n+1}^2)}{(1-v_{n+1}^2)^2} + \frac{(J'(\arccos (v_{n+1})) v_i)^2}{1-v_{n+1}^2}\right) dv.$$
 Since $\theta ^-=\arccos (v_{n+1})$ we simplify
 $$\int_{{a^-}} (1_\Om+ 1_{R_a({a^+}\cap\Om)})J^2(\theta^-)\frac{ v_i^2 }{\sin(\theta^-)^2 }dv\le    $$
$$\frac{1}{\mu_{i+1}(\Om)}\left (\int_{{a^-} }  (1_\Om+ 1_{R_a({a^+}\cap\Om)})   \frac{J^2(\theta^-)}{\sin(\theta^-)^2}\frac{1-v_i^2}{\sin(\theta^-)^2} dv+ \int_{{a^-} }  (1_\Om+ 1_{R_a({a^+}\cap\Om)} )    \frac{(J'(\theta^-) v_i)^2}{\sin(\theta^-)^2}  dv\right).$$
Summing  for $i=1,\hdots,n$, the left hand side becomes
$\int_{{a^-}} (1_\Om+ 1_{R_a({a^+}\cap\Om)})J(\theta^-)^2dv$ and the monotonicity property of $J$ leads to
$$2\int_{H^-} J(\theta^-)^2dv\le  \int_{{a^-}} (1_\Om+ 1_{R_a({a^+}\cap\Om)})J(\theta^-)^2dv.$$
Inside the first term of the right hand side, we use the inequality
\[\sum_{i=1}^{n}\frac{1}{\mu_{i+1}(\Om)}\left(1-\frac{v_i^2}{\sin(\theta^-)^2}\right)\leq \sum_{i=2}^{n}\frac{1}{\mu_i(\Om)},\]
so that
$$\sum_{i=1}^{n} \frac{1}{\mu_{i+1}(\Om)} \int_{{a^-} }  (1_\Om+ 1_{R_a({a^+}\cap\Om)})   \frac{J^2(\theta^-)}{\sin(\theta^-)^2}\frac{1-v_i^2}{\sin(\theta^-)^2} dv\le $$
$$\hskip 6cm \sum_{i=2}^{n}\frac{1}{\mu_i(\Om)}\int_{{a^-} }  (1_\Om+ 1_{R_a({a^+}\cap\Om)})   \frac{J^2(\theta^-)}{\sin(\theta^-)^2}dv.$$
The second term in the right hand side vanishes outside $ ({\Om\cap H^-}) \cup  {R_a({H^+}\cap\Om)}$, hence
$$\sum_{i=1}^{n} \frac{1}{\mu_{i+1}(\Om)}  \int_{{a^-} }  (1_\Om+ 1_{R_a({a^+}\cap\Om)} )    \frac{(J'(\theta^-) v_i)^2}{\sin(\theta^-)^2}  dv\le $$
$$\hskip 6cm
\sum_{i=1}^{n} \frac{1}{\mu_{i+1}(\Om)}  \int_{{H^-} }  (1_\Om+ 1_{R_a({a^+}\cap\Om)} )    \frac{(J'(\theta^-) v_i)^2}{\sin(\theta^-)^2}dv.$$
Relying on the monotonicity property of $b$, we implement now the mass transplantation as follows: if a point belongs to $\Om\cap H^-$ or $R_a({a^+}\cap\Om) \cap H^-$ we keep the integrands unchanged. If a point belongs to $(\Om \cap a^-)\setminus H^-$ or to $R_a({a^+}\cap\Om) \setminus H^-$, we virtually transport it in any free point of $H^-$ or $R_aH^-$, the overall mass being preserved.

We get
\[2\int_{H^-} J(\theta^-)^2 dv\le  2 \left( \sum_{i=2}^{n}\frac{1}{\mu_i(\Om)}\right) \int_{{H^-} }     \frac{J(\theta^-)^2}{\sin(\theta^-)^2}dv+2 \sum_{i=1}^{n} \frac{1}{\mu_{i+1}(\Om)}  \int_{{H^-} }      \frac{(J'(\theta^-) v_i)^2}{\sin(\theta^-)^2}dv.\]
Taking into account the symmetry of $H^-$ and of $J$, we get
$$\int_{{H^-} }      \frac{(J'(\theta^-) v_i)^2}{\sin(\theta^-)^2}dv= \frac 1n \int_{{H^-} }       (J'(\theta^-)  )^2 dv= \frac 1n \int_{{B^{|\Om|/2}} }       (J'(\theta)  )^2 dv.$$
Since
$$\sum_{i=1}^{n}\frac{1}{\mu_i(\Om)}\leq \frac{n}{n-1}\sum_{i=2}^{n}\frac{1}{\mu_i(\Om)}$$
and
$$\mu_1(B^{|\Om|/2})= \frac{\int_{H^-} \left(J'(\theta^-)^2 +  \frac{J(\theta^-)^2}{\sin(\theta^-)^2}\right)dv}{\int_{H^-} J(\theta^-)^2 dv},$$
we conclude the proof.
 \end{proof}

\section{Further remarks and open questions}\label{sec_density}

\noindent{\bf Extension to densities.} The proofs of Theorems \ref{bmn05} and \ref{bmn06} and of their corollaries are exclusively based on mass transplantation. Once the topological result of Section \ref{bmnsec2} can be applied to identify the suitable family of test functions, the geometry of $\Om$ is not anymore relevant. In the spirit of \cite{BH19},   all results of the paper extend naturally to densities. A further work \cite{Ma22} on extremal densities for Neumann eigenvalues on the sphere is in preparation.

Shortly, assume that $\Om  \sq \Sn$ is an open Lipschitz set and that $\rho : \Om \rightarrow [0,1]$ is a measurable function such that $\mathrm{essinf}_\Om \rho >0$.

We consider the well posed eigenvalue problem in $H^1(\Om)$,
\[\mu_k(\rho)= \inf_{S\in{\mathcal S}_{k+1}} \sup_{u \in S\sm \{0\}} \frac{\int_{\Om} \rho|\nabla u|^2 }{\int_{\Om}\rho u^2 }.\]
Then, Theorem \ref{bmn06} reads
\[\mu_2(\rho)\leq \mu_1\left(B^{\frac{1}{2}\int_{\Sn}\rho}\right).\]
All the other estimates for $\mu_1(\rho)$ follow from this inequality. In particular, if $m < \frac{|\mathbb{S}^2|}{2}$,   $ \int_{\Sn}\rho=m$ and $\rho=0$ on $B^m$, then
\[\mu_1(\rho)\leq\mu_1(B^m).\]

\medskip

\noindent{\bf Going beyond $B^m$ for $\mu_1$.} Two questions are in order.
\medskip

  \noindent{\bf [Q1.]} {\it For $m < \frac{|\mathbb{S}^2|}{2}$, can one remove the constraint $\Om \sq \Sn \setminus B^m$?}
  The answer is not clear. One may think that it is only a technical difficulty in the construction of the test functions  in order to remove the inclusion condition $\Om \sq \Sn\sm B^m$, but the answer is more involved.  An observation which is worth to be noticed is  that numerical computations searching for the optimal domain on $ {\mathbb{S}^2}$, when they are performed in the exterior of $B^m$, are very stable and clearly lead to the spherical cap. This is coherent with the result of Corollary \ref{bmn08.1}. Once the inclusion constraint is removed, a certain numerical instability leading to some homogenization of the optimal density  can be observed, a phenomenon to which we do not have a precise explanation (for further details see \cite{Ma22}).   Yet, this does not imply that the result would be false on the whole sphere, but indicates that it is indeed false in the class of densities. This phenomenon is new.

As a consequence,   even  if true for every $\Om\sq \Sn$, $|\Om|<\frac{|\Sn|}{2}$, we do not expect the inequality $\mu_1(\Om)\leq \mu_1(B^m)$ could be proved by means of mass transplantations, like ours. Otherwise, this would necessarily generalize to $\mu_1(\rho)$, which is being contradicted  by the  numerical example given in Table \ref{fig:rho_pl_values} and  Figure \ref{fig:rho_pl}.  
In Table \ref{fig:rho_pl_values} we describe a piecewise affine, radial density, denoted $\rho^{pl}$. Its  graph is given in  Figure ~\ref{fig:rho_pl}. Then $\int_{\mathbb{S}^2} \rho^{pl} = 6< 2\pi$ and $\mu_1(\rho^{pl}) \approx 2.213185 > 2.071487 \approx \mu_1(B^6)$. The numerical computation of  $\mu_1(\rho^{pl})$ leads to a one dimensional eigenvalue problem and, although not certified theoretically, gives a strong indication about the validity of the result.

\begin{table}[b]
  \begin{tabular}{|l||l|l|l|l|l|}
    \hline
    $\theta$ & 0 & 1.3 & 1.4 & 3.14159265 \\
    \hline
    $\rho^{pl}(\theta)$ &  1 & 1 & 0.19480547 & 0.04829935   \\
    \hline
  \end{tabular}
  \smallskip
  \caption{}
  \label{fig:rho_pl_values}
\end{table}

 \begin{figure}
   \centering
   \includegraphics[width=0.40\textwidth]{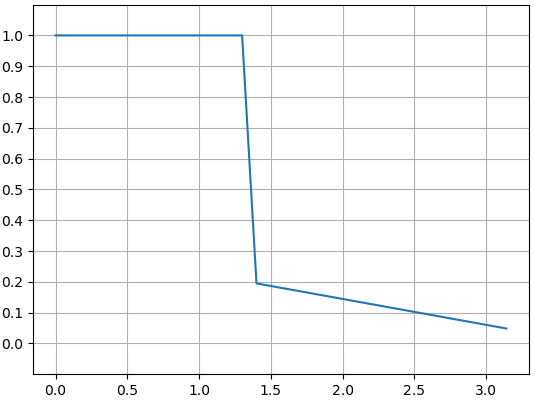}
   \includegraphics[width=0.32\textwidth]{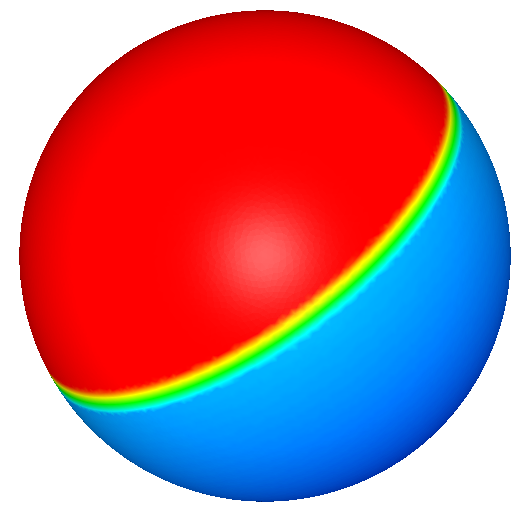}
   \caption{$\rho^{pl}$ plotted with respect to the polar angle $\theta \in [0,\pi]$ (left) and the same density on the sphere (right).}
   \label{fig:rho_pl}
 \end{figure}

 \EEE
 \medskip

  \noindent{\bf [Q2.]}  {\it What happens if $m \ge  \frac{|\mathbb{S}^2|}{2}$?} 
 Is there any chance that the spherical cap of prescribed volume continues to be maximal? The analysis performed in \cite{AB95} leads to the suggestion that this should not be the case. A strong argument is that the first Neumann eigenvalues on the hemisphere of ${\mathbb S}^2$ and the full sphere do coincide. Thus, the eigenvalue is not anymore decreasing when the measure of the geodesic ball increases, at least in a neighborhood of $4\pi$.  Numerical computations performed in \cite{Ma22} suggest the existence of sets having higher first eigenvalue than the spherical cap of the same measure.

\begin{remark}\rm {\bf (Euclidean version of Theorem  \ref{bmn05}).}
 The inequality proved in Theorem \ref{bmn05} comes as a consequence of the construction of the test functions and relies on the arguments of \cite{WaXi18} and \cite{BBC20}. Clearly, based on the result of \cite{BH19} on the maximality of $\mu_2$ in the Euclidean space, the following inequality can be proved, in the lines of Theorem \ref{bmn05}: let $\Om \sq \R^n$ be bounded, open and smooth, then
 $$ \sum_{i=2}^{n}\frac{1}{\mu_i(\Om)}\ge \frac{n-1}{\mu_1(B_{Euc} ^{|\Om|/2})}.$$
 This improves the result of \cite{BH19}.
 \end{remark}

\begin{remark}\rm
 The starting point of both results of \cite{WaXi18} and \cite{BBC20} is the conjecture by Ashbaugh and Benguria   that
$$\sum_{i=1}^{n}\frac{1}{\mu_i(\Om)}\ge \frac{n}{\mu_1(B_{Euc}^{|\Om|})}$$
in the Euclidean space $\R^n$ (and similarly on $\Sn$).
In view of Theorem \ref{bmn05},
we may naturally ask whether
$$\sum_{i=2}^{n+1}\frac{1}{\mu_i(\Om)}\ge \frac{n}{\mu_1(B ^{|\Om|/2})}$$
on $\Sn$  (and similarly in the Euclidean space $\R^n$).  This conjecture is supported  by numerical computations on ${\mathbb S}^2$ (see \cite{Ma22}).
Based on the multiplicity  of the second eigenvalue on the union of two equal spherical caps and on the conjecture above on $\mu_1$, one may
also suspect that
$$\sum_{i=2}^{2n+1}\frac{1}{\mu_i(\Om)}\ge \frac{2n}{\mu_1(B ^{|\Om|/2})}.$$
However, this last assertion fails to be supported by numerical computations.

\end{remark}

\section{Appendix - proof of  Lemma \ref{TopLemma}}

\begin{proof}
This is a consequence of the two following facts: first, by smoothing and a classical application of Sard's theorem we have the $\mathcal{C}^0$-density of vectors $V\in\X^*(M)$ with $\mathcal{C}^\infty$ regularity and nondegenerate zeros. Second for any two sufficently close, smooth and nondegenerate vector fields $V,W$, we have
\[\mathrm{Card}(V^{-1}(0))(V)=\mathrm{Card}(W^{-1}(0))\text{ mod }(4).\]
Indeed, letting $V_t=(1-t)V+tW$ we have an homotopy between $V$ and $W$ that never vanish on $\partial M$ when $\Vert V-W\Vert_{\mathcal{C}^0(M)}<\inf_{\partial M}|V|$. Then by a perturbation argument and parametric transversality theorem (see for instance \cite[Th. 7.1.1]{S11}) there is an homotopy $(H_t)_{0\leq t\leq 1}$ in $\X^*(M)$ such that $H_0$ (respectively  $H_1$) is an arbitrarily small perturbation of $V$ (respectively $W$) and so 
\[\mathrm{Card}(V^{-1}(0))=\mathrm{Card}(H_0^{-1}(0)),\ \mathrm{Card}(W^{-1}(0))=\mathrm{Card}(H_1^{-1}(0)),\]
and such that $H:[0,1]\times M\to TM$ is transversal to the null vector field. As a consequence $Z:=\{(t,x)\in [0,1]\times M:\ H_t(x)=0\}$ is a compact $1$-dimensional manifold that meets $\partial [0,1]\times M$ transversally but that does not meet $[0,1]\times \partial M$ (because the homotopy is in $\X^*(M)$). For every connected component $c$ of $Z$ there are four possibilities (due to the classification of $1$-dimensional manifold, see for instance \cite[Th 5.4.1]{S11}):
\begin{itemize}
\item $c$ is a circle ; in this case it has no influence on counting of zeroes of $H_0,\ H_1$.
\item $c$ is a curve (we write it $c(\tau)_{\tau \in [0,1]}$) such that $c(0)$ and $c(1)$ are both in $\{0\}\times M$. Consider then $\tilde{c}(\tau):=\left(\text{Id}_{[0,1]},S\right)\circ c(\tau)$ ; by the symmetry property of $V$ we know it is also a connected component of $Z$ with both ends in $\{0\}\times M$.

We claim that it is disjoint from $c$; indeed for every $\tau\in [0,1]$, $c(\tau)\neq \tilde{c}(\tau)$, however if $c$ and $\tilde{c}$ span the same curve then $\tilde{c}=c\circ \varphi$ for some continuous map $\varphi:[0,1]\to [0,1]$ with no fixed point, which is a contradiction. As a consequence $c(0),c(1),\tilde{c}(0),\tilde{c}(1)$ are all distincts, so the number of such zeroes is a multiple of $4$.
\item $c$ is a curve with both ends in $\{1\}\times M$; this is the same as above.
\item $c$ is a curve with $c(0)\in\{0\}\times M$, $c(1)\in \{1\}\times M$ ; this counts the same number of zeroes on both sides.
\end{itemize}
Thus the difference of the number of zeroes of $H_0$ and $H_1$ is a multiple of 4, which ends the proof.
\end{proof}

{\bf Acknowledgments.} The first author is thankful to Mark Ashbaugh for a deep insight of the question and fruitful suggestions.
 The authors were supported by ANR SHAPO (ANR-18-CE40-0013).

\bibliographystyle{plain}
\bibliography{biblio}

\end{document}